\documentclass[a4paper,10pt]{amsart}

\usepackage[utf8]{inputenc} 
\usepackage{amsmath}
\usepackage{amsbsy}
\usepackage{amssymb}
\usepackage{amscd,amsthm}
\usepackage{graphicx}
\usepackage[mathcal]{eucal}
\usepackage{verbatim}
\usepackage[english]{babel}
\usepackage[dvigs]{epsfig}
\usepackage{dsfont}
\usepackage{longtable}
\usepackage{marvosym}
\usepackage{anysize}
\usepackage{hyperref}

\newcommand{\e}{\varepsilon}

\newcommand{\Z}{\mathds{Z}}

\newcommand{\Pz}{\mathds{P}}

\newcommand{\q}{\quad}

\newtheorem{thm}{Theorem}[section]
\newtheorem{lem}[thm]{Lemma}
\newtheorem{kor}[thm]{Corollary}

\newtheorem{prop}[thm]{Proposition}

\theoremstyle{definition}
\newtheorem*{defi}{Definition}

\theoremstyle{remark}
\newtheorem*{rema}{Remark}

\title{Inequalities involving the primorial counting function}
\author{Christian Axler}
\address{Institute of Mathematics\\ Heinrich-Heine University Duesseldorf\\
40225 Duesseldorf, Germany}
\email{christian.axler@hhu.de}
\subjclass[2010]{Primary 11A25; Secondary 11N56}
\keywords{Primorials, Riemann hypothesis, Robin's inequality, sum of divisor function}
\date{\today}

\begin{document}

\begin{abstract}
Let $\varphi(n)$ denote the Euler totient function. In this paper, we first establish a new upper bound for $n/\varphi(n)$
involving $K(n)$, the function that counts the number of primorials not exceeding $n$. In particular, this leads to an answer 
to a question raised by Aoudjit, Berkane, and Dusart concerning an upper bound for the sum-of-divisors function $\sigma(n)$. 
Furthermore, we give some lower bounds for $N_k/\varphi(N_k)$ as well as for $\sigma(N_k)/N_k$, where $N_k$ denotes the $k$th 
primorial.
\end{abstract}

\maketitle
\section{Introduction}

Let $\varphi(n)$ denote Euler's totient function, the function which represents the number of positive integers up to $n$
that are relatively prime to $n$. Since $\varphi$ is multiplicative and fulfills $\varphi(p^k) = p^k(1-1/p)$ for any prime 
$p$ and any positive integer $k$, we get
\begin{equation}
\varphi(n) = n\prod_{p \vert n}\left( 1 - \frac{1}{p} \right) \tag{1.1} \label{1.1}
\end{equation}
for every positive integer $n$. Let $p_k$ denote as usual the $k$th prime number with $p_1 = 2$. For every positive integer,
we introduce $N_k$ to be the $k$th primorial; i.e. the product of the first $k$ prime numbers. If we now consider the 
function $f(n) = n/\varphi(n)$, the primorials play an important role in that they are so-called $f$-champions, i.e. for 
every integer $n$ with $1 \leq n < N_{k+1}$, the identity \eqref{1.1} provides that
\begin{equation}
\frac{n}{\varphi(n)} \leq \frac{N_k}{\varphi(N_k)} = \prod_{p \leq p_k} \frac{p}{p-1}. \tag{1.2} \label{1.2}
\end{equation}
So, in order to find effective upper bounds for $n/\varphi(n)$, it is convenient to study the product in \eqref{1.2}.
If we combine the equation in \eqref{1.2} with a result of Vinogradov \cite{vinogradov}, we get
\begin{equation}
\frac{N_k}{\varphi(N_k)} = e^{\gamma} \log p_k + O ( \exp(-a_0\log^{3/5}k)) \tag{1.3} \label{1.3}
\end{equation}
as $k \to \infty$, where $\gamma = 0.5772156\ldots$ denotes the Euler-Mascheroni constant and $a_0$ is a positive absolute
constant. Applying the asymptotic formula for $\log p_k$ found by Cipolla \cite{cp}, we get that
\begin{equation}
\frac{N_k}{\varphi(N_k)} = e^{\gamma} \left( \log k + \log_2 k + \frac{\log_2 k-1}{\log k} - \frac{\log_2^2k - 4\log_2k +
5}{2\log^2k} \right) + O \left( \frac{\log_2^3k}{\log^3k} \right) \tag{1.4} \label{1.4}
\end{equation}
as $k \to\infty$.
Here, and throughout this paper, $\log_j$ denotes the $j$-fold iterated logarithm (for more terms, see Cipolla \cite{cp} or
Arias de Reyna and Toulisse \cite{adrt}). In our first result, we first find an upper bound for $N_k/\varphi(N_k)$ which 
corresponds to the first terms of the asymptotic formula \eqref{1.4} and then we utilize the inequality given in \eqref{1.2} 
to get the following upper bound for $n/\varphi(n)$ involving the primorial counting function $K(x)$, the function that 
counts the number of primorials not exceeding $x$.

\begin{thm} \label{thm101}
For every integer $n \geq N_{740,322}$, we have
\begin{displaymath}
\frac{n}{\varphi(n)} < e^{\gamma} \left( \log K(n) + \log_2 K(n) + \frac{\log_2 K(n)-1}{\log K(n)} - \frac{\log_2^2K(n) -
4\log_2K(n) + 4.897}{2\log^2K(n)}\right).
\end{displaymath}
\end{thm}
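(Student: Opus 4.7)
The overall strategy is to reduce the statement to an upper bound that involves $k$ rather than $K(n)$. Given any integer $n \ge N_{740{,}322}$, I set $k := K(n)$, so that $N_k \le n < N_{k+1}$ and $k \ge 740{,}322$. Then \eqref{1.2} gives
$$\frac{n}{\varphi(n)} \le \frac{N_k}{\varphi(N_k)},$$
so everything reduces to proving the ``primorial version'' of the theorem: for every $k \ge 740{,}322$,
$$\frac{N_k}{\varphi(N_k)} < e^{\gamma}\!\left( \log k + \log_2 k + \frac{\log_2 k - 1}{\log k} - \frac{\log_2^{2}k - 4\log_2 k + 4.897}{2\log^{2}k}\right). \qquad (\star)$$
Substituting $k=K(n)$ in $(\star)$ then yields the desired inequality immediately. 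The announcement in the introduction (``we first find an upper bound for $N_k/\varphi(N_k)$ which corresponds to the first terms of \eqref{1.4}'') suggests that $(\star)$ is indeed the main intermediate result on which the whole theorem rests.

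To prove $(\star)$, I would combine two effective analytic ingredients. The first is an explicit form of Mertens' third theorem for $\prod_{p \le x} p/(p-1)$, with a sharp one-sided error term of Rosser--Schoenfeld / Dusart type, valid unconditionally on an explicit range. The second is an explicit Cipolla-type upper bound for $p_k$ expanded to order $\log_2^{2}k/\log^{2}k$. Writing
$$\frac{N_k}{\varphi(N_k)} \;=\; \prod_{p \le p_k} \frac{p}{p-1} \;=\; e^{\gamma}\log p_k \cdot \exp(\varrho(p_k)),$$
inserting the chosen upper bound for $p_k$, and then expanding $\log(\log p_k)$ in powers of $1/\log k$ produces precisely the right-hand side of $(\star)$, up to an error term controlled by $\varrho(p_k)$ together with the truncated remainders of the two expansions.

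The critical obstacle is constant tracking. The asymptotic identity \eqref{1.4} contains the genuine constant $5$ in the last numerator, whereas $(\star)$ demands $4.897$. The slack available is only $(5 - 4.897)/(2\log^{2}k) = 0.0515/\log^{2}k$, and this small margin must absorb the combined contribution of (a) the Mertens-type error $\varrho(p_k)$, (b) the remainder in Cipolla's expansion of $p_k$, and (c) the truncation of $\log(1+u)$ after its quadratic term. The threshold $k_0 = 740{,}322$ should be precisely the point at which the best available explicit estimates of Rosser--Schoenfeld, Dusart and Axler are strong enough to force this slack to be positive.

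For $k$ below this threshold, I would verify $(\star)$ directly by a finite numerical computation across all primorials $N_k$ with $k \le 740{,}322$, and then check separately for each $n$ in the short interval $[N_{740{,}322}, N_{740{,}323})$ that the inequality of the theorem holds (here $K(n) = 740{,}322$ is constant, so this reduces to the single primorial case just computed). This brute-force check is tedious but not deep, and it is what pins down the specific value $740{,}322$. The genuinely hard part, therefore, is choosing the explicit bounds in (i) and (ii) delicately enough that the constant $4.897$ emerges naturally from the asymptotic analysis for all $k \ge k_0$.
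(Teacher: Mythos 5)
Your outer framing matches the paper exactly: reduce via \eqref{1.2} to the primorial inequality $(\star)$ in $k=K(n)$, prove $(\star)$ analytically for large $k$ by combining an explicit Mertens third theorem for $\prod_{p\le p_k}p/(p-1)$ with an explicit Cipolla-type expansion of $\log p_k$, and close the low range by computation. The ingredients you name (an $e^{\gamma}\log p_k(1+O(1/\log^{j}p_k))$ bound and a Cipolla expansion of $p_k$) are precisely Lemma~\ref{lem202} and the cited bound from \cite{axlerNthp}, and the paper also uses an auxiliary Lemma~\ref{lem201} to rewrite the resulting $1/\log^{2}p_k$ error term in powers of $\log k$, which your sketch implicitly needs but doesn't mention.

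The genuine gap is in your description of where the analytic argument bottoms out and what the numerics must cover. You assert that $k_0=740{,}322$ ``should be precisely the point at which the best available explicit estimates\ldots are strong enough,'' and propose to check the remaining finitely many $k$ by brute force. In fact the Mertens-plus-Cipolla route only becomes available at $k\ge\pi(10^{19})\approx 2.34\times 10^{17}$ (the range where Lemma~\ref{lem202} holds), and checking $(\star)$ for every $k$ from $740{,}322$ up to $\pi(10^{19})$ is far beyond feasible computation. The paper therefore inserts a third, middle tier: for $564{,}397{,}542\le k<\pi(10^{19})$ it does \emph{not} invoke Mertens directly but instead uses the pre-established bound $n/\varphi(n)\le e^{\gamma}(\log\log n+0.0094243/(\log\log n)^2)$ from \cite{axlernicolas}, the identity $\log N_k=\vartheta(p_k)$, the elementary inequality $\vartheta(p_m)\ge m$, and a Cipolla-type expansion of $\log\vartheta(p_k)$. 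Only after this reduction is the residual computer check confined to the manageable window $740{,}322\le k\le 564{,}397{,}542$. Your proposal is missing this intermediate bridge, and without it the computational step you describe cannot actually be carried out. There is also a minor slip in the closing paragraph: the theorem asserts nothing for $k<740{,}322$ (and in fact $(\star)$ fails at $k=740{,}321$), so the finite check runs \emph{upward} from $740{,}322$ to the analytic cutoff, not over ``all primorials $N_k$ with $k\le 740{,}322$.''
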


\begin{rema}
Note that the inequality given in Theorem \ref{thm101} does not hold for $n= N_{740,321}$.
\end{rema}

On the other hand, we give the following lower bound for $N_k/\varphi(N_k)$.

\begin{thm} \label{thm102}
For every integer $k \geq \exp(\exp(5.879)) = 1.734\ldots \times 10^{155}$, we have
\begin{equation}
\frac{N_k}{\varphi(N_k)} > e^{\gamma} \left( \log k + \log_2 k + \frac{\log_2 k -1}{\log k} - \frac{\log_2^2k - 4\log_2k
+ 5}{2\log^2k}\right). \tag{1.5} \label{1.5}
\end{equation}
\end{thm}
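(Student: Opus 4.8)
The plan is to establish the lower bound by starting from the exact identity
\[
\frac{N_k}{\varphi(N_k)} = \prod_{p \le p_k} \frac{p}{p-1} = \prod_{p \le p_k} \left(1 - \frac{1}{p}\right)^{-1}
\]
coming from \eqref{1.2}, and then invoke the classical Mertens-type estimate in its explicit form: for $x$ sufficiently large,
\[
\prod_{p \le x} \left(1 - \frac{1}{p}\right)^{-1} = e^{\gamma} \log x \cdot \left(1 + E(x)\right),
\]
where $E(x)$ admits an effective bound of the shape $|E(x)| \le c/\log^2 x$ (or better, using zero-free region or RH-type inputs) valid for explicit $x$. Substituting $x = p_k$ and writing everything in terms of $\log p_k$, the task reduces to a clean analytic inequality: one must show that
\[
e^{\gamma}\log p_k \cdot (1 + E(p_k)) > e^{\gamma}\left(\log k + \log_2 k + \frac{\log_2 k - 1}{\log k} - \frac{\log_2^2 k - 4\log_2 k + 5}{2\log^2 k}\right).
\]

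The second ingredient is Cipolla's asymptotic expansion for $\log p_k$ in an explicit form with controlled remainder. One expands
\[
\log p_k = \log k + \log_2 k + \frac{\log_2 k - 1}{\log k} - \frac{\log_2^2 k - 4\log_2 k + 5}{2\log^2 k} + R(k),
\]
where $R(k)$ is of order $\log_2^3 k / \log^3 k$ with an effective constant, valid for $k$ beyond an explicit threshold (such bounds are available in the literature of Dusart-type estimates and the references \cite{cp}, \cite{adrt}). Multiplying the bracketed quantity by $e^{\gamma}(1 + E(p_k))$ and comparing with the desired right-hand side of \eqref{1.5}, the difference between the two bracketed polynomials in $\log_2 k$ and $\log k$ is exactly $R(k)$, a positive-leading-order quantity once the sign is checked; the extra factor $(1 + E(p_k))$ multiplied against the main term $e^{\gamma}\log p_k$ contributes a term of order $e^{\gamma}\log p_k \cdot E(p_k)$, which is $O(1/\log p_k)$ and thus of smaller order than the gap coming from $R(k)$. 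So one needs to verify that, for $k \ge \exp(\exp(5.879))$, the genuine (positive) contribution dominates the (signed) error contributions; this is a one-variable calculus estimate in the auxiliary variable $t = \log_2 k \ge 5.879$, i.e.\ in $u = \log k = e^t$.

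The main obstacle, and the reason for the peculiar threshold $\exp(\exp(5.879))$, will be making the error terms fully explicit and tracking their constants so that the final inequality in the single variable $t = \log_2 k$ can be rigorously reduced to checking a polynomial inequality in $e^{t}$ and $t$ for $t \ge 5.879$. Concretely, I expect the proof to proceed by: (i) fixing an explicit effective Mertens bound for $E(p_k)$ valid for $p_k$ large (equivalently $k$ large), translating the constraint on $p_k$ into a constraint on $k$ via a lower bound for $p_k$; (ii) fixing an explicit Cipolla-type bound for $R(k)$ with sign information, valid for $k$ past a threshold; (iii) combining (i) and (ii) into a single inequality $g(t) > 0$ for $t \ge 5.879$, where $g$ is an elementary function of $t$ and $e^t$; and (iv) verifying $g(t) > 0$ on $[5.879, \infty)$ by a monotonicity argument — showing $g$ is eventually increasing and $g(5.879) > 0$ — which is where the numerical value $5.879$ is forced, being essentially the smallest cutoff for which all the accumulated error constants still leave a positive margin.
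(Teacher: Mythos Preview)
Your overall architecture matches the paper's proof: start from $N_k/\varphi(N_k)=\prod_{p\le p_k}(1-1/p)^{-1}$, feed in an explicit Mertens-type lower bound in terms of $\log p_k$, then replace $\log p_k$ by an explicit Cipolla-type lower bound in $k$, and reduce to an elementary inequality in $u=\log k$ and $t=\log_2 k$. That is exactly what the paper does, via its Lemma~\ref{lem601} and a lower bound for $p_k$ from \cite{axlerNthp}.

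However, there is a genuine gap in your error bookkeeping. You take $|E(x)|\le c/\log^2 x$, so that the Mertens error contributes $e^{\gamma}\log p_k\cdot E(p_k)=O(1/\log p_k)=O(1/\log k)$, and then assert this is ``of smaller order than the gap coming from $R(k)$''. That comparison is backwards: $1/\log k$ is asymptotically \emph{larger} than $R(k)=O((\log_2 k)^3/\log^3 k)$. With only a $c/\log^2 x$ Mertens bound, the inequality you need, namely $R(k)>c/\log p_k$, is false for all large $k$, and the argument cannot close for any threshold on $k$.

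The paper avoids this by proving a sharper Mertens lower bound (Lemma~\ref{lem601}):
\[
\prod_{p\le p_k}\frac{p}{p-1} > e^{\gamma}\Bigl(\log p_k-\frac{14.5}{\log^3 p_k}\Bigr),
\]
obtained from the Rosser--Schoenfeld identity \eqref{3.1} together with an explicit $|\vartheta(y)-y|$ bound. This gives an additive error of size $O(1/\log^3 k)$, which \emph{is} dominated by the positive leading part of $R(k)$, and the final elementary inequality then holds precisely for $\log_2 k\ge 5.879$. So your plan works once you upgrade ``$|E(x)|\le c/\log^2 x$ (or better)'' to an actual additive error $O(1/\log^3 x)$ and redo the order comparison accordingly.
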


The asymptotic formula \eqref{1.3} can also be used to show that
\begin{displaymath}
\frac{N_k}{\varphi(N_k)} = e^{\gamma} \log_2 N_k + O ( \exp(-a_1\log^{3/5}k))
\end{displaymath}
as $k \to \infty$, where $a_1$ is a positive absolute constant. In \cite[Th\'eor\`eme 2]{nico83}, Nicolas proved that the
Riemann hypothesis is equivalent to
\begin{equation}
\frac{N_k}{\varphi(N_k)} > e^{\gamma} \log_2 N_k \tag{1.6} \label{1.6}
\end{equation}
for every positive integer $k$. We now use this necessary and sufficient criterion to give the following result concerning
the inequality \eqref{1.5}.

\begin{thm} \label{thm103}
If the Riemann hypothesis is true, then the inequality \eqref{1.5} holds for every integer $k \geq 3.900491 \times 10^{30}$.
\end{thm}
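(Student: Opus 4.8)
The plan is to combine Nicolas's necessary-and-sufficient criterion \eqref{1.6} with the explicit (unconditional) asymptotic machinery already available for $\theta(p_k) = \log N_k$ and $\log p_k$, under the assumption of the Riemann hypothesis. The target inequality \eqref{1.5} involves the right-hand side of the Cipolla-type expansion in $\log k$, so the strategy is to pass from the variable $\log_2 N_k = \log \theta(p_k)$ appearing in \eqref{1.6} to the variable $\log k$. First I would record the RH-conditional estimate $\theta(x) = x + O(\sqrt{x}\log^2 x)$, and more precisely the explicit bound $|\theta(x) - x| < \frac{1}{8\pi}\sqrt{x}\log^2 x$ valid for $x \geq 599$ (Schoenfeld), which gives $\theta(p_k) = p_k(1 + \text{small})$ and hence $\log N_k = \log p_k + \log(1 + \text{small})$. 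Combined with the RH-conditional refinement of Cipolla's formula for $\log p_k$ in terms of $\log k, \log_2 k$ (which on RH comes with an explicit error term from the explicit form of the prime counting function $\pi(x)$), this yields an explicit lower bound for $\log_2 N_k = \log\theta(p_k)$ of the shape $\log k + \log_2 k/\log k + \cdots$ with a controlled error.

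Next I would substitute this lower bound for $e^\gamma \log_2 N_k$ into Nicolas's inequality \eqref{1.6} and compare the resulting expression with the right-hand side $R(k)$ of \eqref{1.5}, namely $e^\gamma(\log k + \log_2 k + (\log_2 k - 1)/\log k - (\log_2^2 k - 4\log_2 k + 5)/(2\log^2 k))$. The key point is that $e^\gamma \log_2 N_k$ exceeds $R(k)$ once $k$ is large enough: writing $\log_2 N_k$ as $\log p_k$ plus a correction, and $\log p_k$ via its expansion, the dominant discrepancy between $e^\gamma \log_2 N_k$ and $R(k)$ is a positive quantity of order $e^\gamma \log_2 k / \log k$ (coming from the $\log_2 k$ term inside $\log p_k = \log k + \log_2 k + \cdots$ surviving the outer logarithm), while all the RH-conditional error terms and the lower-order mismatches are $O(\log_2^2 k / \log^2 k)$ or smaller. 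So the inequality reduces to an explicit one-variable inequality in $k$, and the threshold $3.900491 \times 10^{30}$ is presumably the point where this explicit inequality first holds for all larger $k$, verified by monotonicity of the relevant function for $k$ beyond that range plus a finite check (or rather a check against the known validity of \eqref{1.6} which, on RH, holds for all $k$, so the only issue is the gap between the clean expansion and $R(k)$).

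The main obstacle I expect is the bookkeeping of error terms when iterating logarithms: one must track how $\theta(p_k) = p_k + O(\sqrt{p_k}\log^2 p_k)$ propagates through $\log(\cdot)$ and then compare term-by-term with Cipolla's expansion to four terms, keeping every constant explicit, since the claimed threshold is a precise numerical value. In particular, converting the RH error for $\pi(x)$ (or $\theta(x)$) into an error for $p_k$ as a function of $k$, and then into an error for $\log_2 N_k$, requires care: the error $\sqrt{p_k}\log^2 p_k / p_k = \log^2 p_k/\sqrt{p_k}$ inside the logarithm contributes something like $\log^2 p_k / (\sqrt{p_k} \log p_k) \approx \log k / \sqrt{k}$ to $\log_2 N_k$, which is negligible, but one must confirm it is dominated by the $e^\gamma \log_2 k / \log k$ margin for $k \geq 3.9 \times 10^{30}$, where $\log k \approx 70$ and $\log_2 k \approx 4.2$, so the margin is only about $e^\gamma \cdot 4.2/70 \approx 0.107$ — tight enough that the explicit constants genuinely matter. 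The secondary obstacle is ensuring the comparison function is monotone past the threshold so that a finite numerical verification suffices; this will likely need a derivative estimate on the difference $e^\gamma\log_2 N_k - R(k)$ as a function of $k$ (or of $x = \log k$), showing it is increasing for $x$ beyond roughly $70$.
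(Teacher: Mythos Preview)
Your overall architecture is right and matches the paper: start from Nicolas's criterion \eqref{1.6}, which under RH gives $N_k/\varphi(N_k) > e^{\gamma}\log\vartheta(p_k)$, then lower-bound $\log\vartheta(p_k)$ explicitly in terms of $\log k$ and $\log_2 k$, and compare with the right-hand side of \eqref{1.5}. Where you diverge from the paper is in how the second step is carried out, and there is also a conceptual slip in your margin analysis.

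First, the paper does not use Schoenfeld's RH-conditional bound $|\vartheta(x)-x|<\tfrac{1}{8\pi}\sqrt{x}\log^2 x$ or any RH-conditional estimate for $p_k$. Instead it simply cites an \emph{unconditional} lower bound for $\vartheta(p_k)$ (Theorem~1.7 of \cite{axlerNthp}), valid precisely for $k\ge 3.900491\times 10^{30}$, of the form
\[
\log\vartheta(p_k)\ \ge\ \log k+\log_2 k+\log\!\Bigl(1+\tfrac{\log_2 k-1}{\log k}+\tfrac{\log_2 k-2}{\log^2 k}-\tfrac{\log_2^2 k-6\log_2 k+11}{2\log^3 k}\Bigr).
\]
The threshold in the theorem is exactly the validity threshold of this cited estimate; RH enters only once, through \eqref{1.6}. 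So your detour through Schoenfeld and an RH-conditional inversion of $\pi(x)$ is unnecessary work: an unconditional bound of the required strength already exists and is what the paper invokes.

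Second, your estimate of the margin is off by two orders in $1/\log k$. You claim the dominant discrepancy between $e^{\gamma}\log_2 N_k$ and the right-hand side of \eqref{1.5} is of size $e^{\gamma}\log_2 k/\log k$, coming from ``the $\log_2 k$ term inside $\log p_k$ surviving the outer logarithm''. But there is no extra outer logarithm: $\log_2 N_k=\log\vartheta(p_k)$, and since $\vartheta(p_k)=p_k(1+o(1))$ one has $\log\vartheta(p_k)=\log p_k+o(1)$ directly. Cipolla's expansion then gives $\log p_k$ matching the right-hand side of \eqref{1.5} term by term through the $1/\log^2 k$ level, so the actual comparison lives at order $\log_2^3 k/\log^3 k$, not $\log_2 k/\log k$. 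Your numerical estimate of the margin ($\approx 0.107$) is therefore far too optimistic; the real game is expanding $\log(1+g(k))$ and checking that the constant $5$ in \eqref{1.5} is beaten by the constant coming from $g$, exactly as in the proof of Theorem~\ref{thm102}. This does not break your approach, but it does mean the bookkeeping you anticipate is both finer and simpler than you describe.
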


Next, we consider the arithmetical function $\sigma$ which is defined by
\begin{displaymath}
\sigma(n) = \sum_{d \vert n} d
\end{displaymath}
and denotes the sum of the divisors of $n$. Gronwall \cite[p. 119]{gronwall1913} found the maximal order of $\sigma$ by
showing that
\begin{displaymath}
\limsup_{n \to \infty} \frac{\sigma(n)}{n \log_2 n} = e^{\gamma}.
\end{displaymath}
Under the assumption that the Riemann hypothesis is true, Ramanujan \cite{ramanujan1997} showed that $\sigma(n)/n <
e^{\gamma} \log_2 n$
for all sufficiently large positive integers $n$. Robin \cite[Th\'eor\`eme 1]{robin1984} improved Ramanujan's result by
showing that the Riemann hypothesis is true \textit{if and only if}
\begin{equation}
\frac{\sigma(n)}{n} < e^{\gamma} \log_2 n \q \q (n > 5040). \tag{1.7} \label{1.7}
\end{equation}
This criterion on the Riemann hypothesis is called \textit{Robin's criterion} and the inequality \eqref{1.7} is called
\textit{Robin's inequality}. Robin's inequality is proved to hold in many cases (see, for instance, Banks et al. 
\cite{banks2009}, Briggs \cite{briggs2006}, Grytczuk \cite{grytczuk2007}, Grytczuk \cite{grytczuk2010}, and Choie et al. 
\cite{choie2007}), but remains open in general. Lagarias \cite[Theorem 1.1]{lagarias} used harmonic numbers to demonstrate an 
alternate form of Robin's assertion \eqref{1.7}, which requires no exceptions. However, the ratio $\sigma(n)/n$ cannot be too 
large. For instance, the present author \cite[Theorem 1.3]{axlerrobin} proved that the inequality 
\begin{displaymath}
\frac{\sigma(n)}{n} < e^{\gamma} \log_2 n + \frac{0.0094243e^{\gamma}}{\log_2^2n}
\end{displaymath}
holds unconditionally for every integer $n > 5040$.
The initial motivation to write this paper is based on a recent paper by Aoudjit, Berkane, and Dusart \cite{berkane} 
concerning another upper bound for $\sigma(n)/n$. Balazard \cite[p.\:257]{balazard} showed that for every positive integer 
$N$, one has
\begin{displaymath}
K(x) = \text{li}(\log x) + O \left( \frac{\log x}{\log_2^{N+1}x} \right) \tag{1.8} \label{1.8}
\end{displaymath}
as $x \to \infty$, where the integral logarithm $\text{li}(x)$ is defined for every $x \geq 0$ as
\begin{displaymath}
\text{li}(x) = \int_0^x \frac{\text{d}t}{\log t} = \lim_{\varepsilon \to 0+} \left \{ 
\int_{0}^{1-\varepsilon}{\frac{\text{d}t}{\log t}} + \int_{1+\varepsilon}^{x}{\frac{\text{d}t}{\log t}} \right \}.
\end{displaymath}
Since the asymptotic formula \eqref{1.8} provides that $\log K(x) \sim \log_2 x$ as $x \to \infty$, Aoudjit, Berkane, and 
Dusart \cite[Theorem 1.2]{berkane} studied in their common paper another form of an upper bound for the quotient 
$\sigma(n)/n$ than that exposed in \eqref{1.7}. They showed that
\begin{displaymath}
\frac{\sigma(n)}{n} \leq e^{\gamma} \left( \log K(n) + \log_2 K(n) + \frac{\log_2 K(n)}{\log K(n)} + \frac{1}{20(\log_2 
K(n))^2}\right)
\end{displaymath}
holds unconditionally for every integer $n \geq 30$. Further, they \cite[Proposition 1]{berkane} assumed to have verified
that the slight better inequality 
\begin{equation}
\frac{\sigma(n)}{n} \leq e^{\gamma} \left( \log K(n) + \log_2 K(n) + \frac{\log_2 K(n)}{\log K(n)} \right) \tag{1.9}
\label{1.9}
\end{equation}
holds unconditionally for every integer $n$ satisfying $205 \leq n \leq CA_{160}$ (however, the inequality \eqref{1.9} does
not hold for $n = 1,680$). Here $CA_k$ denotes the $k$th colossally abundant number (see Section 5) with $CA_{160} = 1.8772 
\ldots \times 10^{326}$. Based on this numerical computation, they \cite[Question 1]{berkane} asked whether the inequality 
\eqref{1.9} holds for every integer $n \geq 205$ and conjectured that the inequality \eqref{1.9} provides a sufficient and 
necessary criterion for the Riemann hypothesis. In this context they \cite[Proposition 2]{berkane} assumed to have proved 
that under the assumption of the Riemann hypothesis the inequality \eqref{1.9} holds for every integer $n \geq 205$. 
Unfortunately, there are some errors in their proof, so that the proof of does not longer work. Nevertheless, we utilize 
Theorem \ref{thm101} to get the following result where we show that an even sharper inequality holds unconditionally. Here, 
and throughout this paper, we let
\begin{equation}
J_0 = 2^9 \times 3^5 \times 5^3 \times 7^2 \times 11^2 \times 13^2 \times 151 \prod_{17\le p \le 139} p = 7.847144\ldots
\times 10^{65}. \tag{1.10} \label{1.10}
\end{equation}
Note that
\begin{equation}
N_{38} < J_0 < N_{39}. \tag{1.11} \label{1.11}
\end{equation}

\begin{thm} \label{thm104}
For every integer $n > J_0$, we have
\begin{equation}
\frac{\sigma(n)}{n} \leq e^{\gamma} \left( \log K(n) + \log_2 K(n) + \frac{\log_2 K(n) - 1}{\log K(n)} - \frac{\log_2^2 K(n)
- 4\log_2 K(n) + 4.897}{2\log^2 K(n)}\right). \tag{1.12} \label{1.12}
\end{equation}
\end{thm}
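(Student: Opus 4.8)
The idea is to deduce \eqref{1.12} from the bound for $n/\varphi(n)$ in Theorem~\ref{thm101} by means of the elementary inequality $\sigma(n)/n<n/\varphi(n)$, valid for every integer $n>1$: comparing the Euler products $\sigma(n)/n=\prod_{p^{a}\|n}(1-p^{-(a+1)})/(1-p^{-1})$ and $n/\varphi(n)=\prod_{p\mid n}(1-p^{-1})^{-1}$ and using $1-p^{-(a+1)}<1$ gives it at once. Hence, for every integer $n\ge N_{740{,}322}$, Theorem~\ref{thm101} yields $\sigma(n)/n<n/\varphi(n)<g(K(n))$, where $g$ denotes the right-hand side of \eqref{1.12} regarded as a function of $K(n)$; this is \eqref{1.12}. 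It therefore remains to treat $J_0<n<N_{740{,}322}$, where by \eqref{1.11} the integer $k:=K(n)$ lies in $\{38,\dots,740{,}321\}$ and $N_k\le n<N_{k+1}$.

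On this range the estimate $\sigma(n)/n\le n/\varphi(n)\le N_k/\varphi(N_k)$ (which uses only $\omega(n)\le k$, forced by $n<N_{k+1}$) is no longer sufficient, since by the Remark after Theorem~\ref{thm101} the value $N_k/\varphi(N_k)$ already exceeds $g(k)$ for $k$ close to $740{,}321$; one must exploit $n<N_{k+1}$ more fully. Since $\sigma(m)/m$ is a record function, $\sigma(n)/n\le\sigma(m_k)/m_k$, where $m_k$ is the largest superabundant number below $N_{k+1}$; such $m_k$ has at most $k$ distinct prime factors and a restricted shape, for which the colossally abundant numbers give effective control (see Section~5). Writing $m_k=\prod p^{b_p}$, the inequality $\log m_k<\log N_{k+1}$ bounds the excess $\sum_p\max\{b_p-1,0\}\log p$, and feeding this into $\sigma(p^{b})/p^{b}=(1+p^{-1})(1-p^{-(b+1)})/(1-p^{-2})$ shows that $\sigma(m_k)/m_k$ is smaller than $N_k/\varphi(N_k)$ by a factor reflecting the boundedness of that excess; call the resulting upper bound $U(k)$.

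It remains to verify $U(k)\le g(k)$ for $38\le k\le 740{,}321$. For $k$ above an explicit threshold $k_0$ this is done analytically: inserting the effective estimates of Rosser--Schoenfeld and Dusart for $p_k$ and for $\log N_k=\sum_{p\le p_k}\log p$, Cipolla's expansion of $\log p_k$, and Vinogradov's estimate \eqref{1.3} into both $U(k)$ and $g(k)$ reduces the claim to a one-variable inequality in which the gain built into $U(k)$ over $N_k/\varphi(N_k)$ comfortably outweighs the difference $N_k/\varphi(N_k)-g(k)$, which is negative for $k$ large and, where positive (on a bounded range of $k$), is only of size $O(\log_2^3k/\log^3k)$. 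For $38\le k\le k_0$ one instead checks $\sigma(m)/m\le g(K(m))$ directly for the finitely many superabundant (equivalently, colossally abundant) numbers $m$ with $J_0<m<N_{k_0+1}$, which suffices because the maximum of $\sigma(m)/m$ on each interval $[N_j,N_{j+1})$ is attained at a superabundant number. The main obstacle is the near-sharpness of the statement: since \eqref{1.12} fails at $n=J_0$ and the inequality of Theorem~\ref{thm101} fails at $n=N_{740{,}321}$, there is essentially no slack, so the estimate for the gain factor in $U(k)$ has to be pushed against the full error term in Cipolla's expansion, and the boundary case $K(n)=38$ (that is, $J_0<n<N_{39}$) requires identifying $J_0$ — itself a colossally abundant number with $K(J_0)=38$ — as the largest integer at which \eqref{1.12} fails, and verifying that every champion of $\sigma(m)/m$ in $(J_0,N_{39})$ is $\le g(38)$.
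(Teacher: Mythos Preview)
Your opening step---combining Theorem~\ref{thm101} with $\sigma(n)/n<n/\varphi(n)$ to settle $n\ge N_{740{,}322}$---is exactly what the paper does. The overall shape of the rest (reduce to superabundant champions, treat $k=38$ separately) is also right. But the execution on the remaining range has two real gaps.

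\textbf{The middle range.} For $N_{39}\le n<N_{740{,}322}$ the paper does \emph{not} attempt an analytic bound $U(k)$ of the kind you sketch. It instead invokes the computationally verified Robin inequality $\sigma(n)/n<e^{\gamma}\log_2 n$ (Morrill--Platt, Lemma~\ref{lem504}) and reduces to the easy computer check $\log_2 N_{k+1}<F(k)$ for $515\le k\le 740{,}321$; Nicolas' sharper unconditional bound \eqref{5.11} handles $60\le k\le 514$; and a short table of superabundant numbers covers $39\le k\le 59$. Your proposed $U(k)$, built from a ``gain factor'' of $\sigma(m_k)/m_k$ over $N_k/\varphi(N_k)$ obtained by bounding $\sum_p(b_p-1)\log p$, is not a proof as written: the largest superabundant $m_k<N_{k+1}$ can be much smoother than $N_k$, so the multiplicative gain does not localize to a clean function of $k$, and ``inserting Rosser--Schoenfeld, Dusart, Cipolla, Vinogradov'' is a plan, not an estimate. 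Given that you yourself note there is essentially no slack, an inequality of Robin type is the missing ingredient.

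\textbf{The boundary case $J_0<n<N_{39}$.} Your plan of ``checking every champion in $(J_0,N_{39})$'' fails outright: there are \emph{no} superabundant numbers in $(SA_{425},N_{39})$ (the next one, $SA_{426}$, already exceeds $N_{39}$), and the naive bound $\sigma(n)/n\le\sigma(SA_{426})/SA_{426}=8.834\ldots$ is above $e^{\gamma}F(38)=8.827\ldots$. The paper closes this gap via Proposition~\ref{prop503}: it fixes a parameter $\varepsilon$, uses the benefit function $\mathrm{ben}_\varepsilon$ and Nicolas' algorithm to list \emph{all} integers in $(SA_{425},N_{39})$ with small benefit (hence with $\sigma(n)/n>8.8272$), finds exactly two, and checks that only $J_0$ violates \eqref{1.12}. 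Also, a small correction: $J_0=(151/149)\,SA_{425}$ is neither colossally abundant nor superabundant.
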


\begin{rema}
The inequality \eqref{1.12} does \textit{not} hold for $n = J_0$. In other words, $n = J_0$ is the largest positive integer
for which the inequality \eqref{1.12} does not hold. 
\end{rema}

In the proof of Theorem \ref{thm104} the so-called \textit{superabundant numbers} (cf.\:Section 5) play an important role.
Next, we find the following weaker but more compact version of Theorem \ref{thm104}.

\begin{kor} \label{kor105}
For every integer $n > 521,585,633,051,683,200$, we have
\begin{equation}
\frac{\sigma(n)}{n} \leq e^{\gamma} \left( \log K(n) + \log_2 K(n) + \frac{\log_2 K(n)-1}{\log K(n)} \right). \tag{1.13}
\label{1.13}
\end{equation}
\end{kor}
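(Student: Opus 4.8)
The plan is to deduce Corollary \ref{kor105} from Theorem \ref{thm104} by comparing the two right-hand sides, and then to handle the remaining finite range of cases below $J_0$ directly.

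First I would compare the two bounds. Writing $L = K(n)$, the right-hand side of \eqref{1.13} minus the right-hand side of \eqref{1.12} equals
\[
\frac{e^{\gamma}}{2\log^2 L}\bigl(\log_2^2 L - 4\log_2 L + 4.897\bigr) = \frac{e^{\gamma}}{2\log^2 L}\bigl((\log_2 L - 2)^2 + 0.897\bigr),
\]
which is strictly positive as soon as $L = K(n) \geq 2$, i.e. as soon as $n \geq N_2 = 6$. Hence the right-hand side of \eqref{1.13} is never smaller than the right-hand side of \eqref{1.12}, and Theorem \ref{thm104} already yields \eqref{1.13} for every integer $n > J_0$.

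It remains to prove \eqref{1.13} for $521{,}585{,}633{,}051{,}683{,}200 < n \leq J_0$. Since $N_{14} \leq 521{,}585{,}633{,}051{,}683{,}200 < N_{15}$ and, by \eqref{1.11}, $N_{38} < J_0 < N_{39}$, each such $n$ satisfies $K(n) \in \{14, 15, \ldots, 38\}$; and for a fixed value $m = K(n)$ the right-hand side of \eqref{1.13} is the constant $c_m := e^{\gamma}\bigl(\log m + \log_2 m + (\log_2 m - 1)/\log m\bigr)$. Thus it is enough to show, for each $m$ with $14 \leq m \leq 38$, that $\sigma(n)/n \leq c_m$ holds for every integer $n$ with $N_m \leq n < N_{m+1}$, where in the case $m = 14$ one may restrict to $n > 521{,}585{,}633{,}051{,}683{,}200$.

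To establish this I would reduce to a finite search, exactly as in the proof of Theorem \ref{thm104}. On the one hand, since $\sigma(n)/n < n/\varphi(n) = \prod_{p \mid n}p/(p-1)$, an integer $n$ with $\sigma(n)/n > c_m$ must have all its prime divisors small and must carry enough prime-power contributions, and all integers $n < N_{39}$ of this restricted multiplicative shape can be listed explicitly. On the other hand, the champions of $\sigma(n)/n$ are precisely the superabundant numbers, of which only finitely many lie below $N_{39}$ and which are explicitly computable, so that on each block $N_m \leq n < N_{m+1}$ the supremum of $\sigma(n)/n$ is attained at a superabundant number. Combining the two observations, one obtains an explicit finite list of all integers $n < N_{39}$ with $\sigma(n)/n > c_{K(n)}$ and checks that the largest of them is $521{,}585{,}633{,}051{,}683{,}200$; for the larger admissible values of $m$ one may instead use the unconditional bound $\sigma(n)/n < e^{\gamma}\log_2 n + 0.0094243e^{\gamma}/\log_2^2 n$ for $n > 5040$ from \cite{axlerrobin}, together with explicit upper estimates for $\log N_{m+1} = \theta(p_{m+1})$ and for $p_{m+1}$, which makes the inequality $\sigma(n)/n \leq c_m$ immediate on those blocks.

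The main obstacle is this finite verification. Since $\sigma(n)/n$ is far from monotone, one must be careful that no relevant integer in a block $[N_m, N_{m+1})$ is overlooked; and for the smallest values of $m$ --- in particular $m = 14$ and $m = 15$ --- the constant $c_m$ exceeds the maximum of $\sigma(n)/n$ on the corresponding block only by a small margin, so that the estimates, and in particular the identification of $521{,}585{,}633{,}051{,}683{,}200$ as the exact largest integer for which \eqref{1.13} fails, have to be carried out with care, essentially by reusing the analysis of the extremal numbers appearing in the proof of Theorem \ref{thm104}.
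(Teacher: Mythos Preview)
Your approach is correct and mirrors the paper's proof: reduce to Theorem \ref{thm104} for $n>J_0$ via the positivity of $(\log_2 K(n)-2)^2+0.897$, then cover the remaining range $521{,}585{,}633{,}051{,}683{,}200<n<N_{39}$ block by block using superabundant numbers, with the delicate block $K(n)=14$ handled by the same benefit-function technique as in Proposition \ref{prop503} (this is exactly the paper's Proposition \ref{prop601}). One small imprecision: the supremum of $\sigma(n)/n$ on a block $[N_m,N_{m+1})$ need not be \emph{attained} at a superabundant number in that block; what you actually use (and what the paper uses) is only the upper bound $\sigma(n)/n\le \sigma(SA_j)/SA_j$ for the least $SA_j\ge N_{m+1}$.
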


\begin{rema}
The inequality \eqref{1.13} does also not hold for $n= 521,585,633,051,683,200$. 
\end{rema}

Corollary \ref{kor105} now provides an answer to the above question raised by Aoudjit, Berkane, and Dusart 
\cite[Question 1]{berkane} whether the inequality \eqref{1.9} holds unconditionally for every integer $n \geq 205$.

\begin{kor} \label{kor106}
The inequality \eqref{1.9} holds for every integer $n > 1,680$.
\end{kor}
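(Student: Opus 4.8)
The plan is to split the range $n > 1{,}680$ into a ``large'' part, disposed of directly by Corollary \ref{kor105}, and a finite ``small'' part, disposed of by a reduction to superabundant numbers. First I would observe that for every integer $n > 521{,}585{,}633{,}051{,}683{,}200$ the inequality \eqref{1.9} is an immediate consequence of Corollary \ref{kor105}: since $521{,}585{,}633{,}051{,}683{,}200 \geq N_{14}$ we have $K(n) \geq 14$, so $\log K(n) > 0$ and hence
\[
\frac{\log_2 K(n) - 1}{\log K(n)} < \frac{\log_2 K(n)}{\log K(n)};
\]
combining this with the upper bound \eqref{1.13} for $\sigma(n)/n$ yields \eqref{1.9}. (One could instead invoke Theorem \ref{thm104}, since \eqref{1.12} is even stronger than \eqref{1.9}, but Corollary \ref{kor105} has the much smaller threshold and so leads to a shorter finite check.)

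It then remains to treat the integers $n$ with $1{,}680 < n \leq 521{,}585{,}633{,}051{,}683{,}200$. Since $N_4 = 210 \leq n < N_{15} = 614{,}889{,}782{,}588{,}491{,}410$, each such $n$ lies in a unique interval $[N_k, N_{k+1})$ with $4 \leq k \leq 14$, on which $K(n) = k$, so that the right-hand side of \eqref{1.9} equals the constant $R_k := e^{\gamma}(\log k + \log_2 k + \tfrac{\log_2 k}{\log k})$. Since $m \mapsto \sigma(m)/m$ attains its maximum over all $m \leq x$ at the largest superabundant number not exceeding $x$ (these numbers being, by definition, the left-to-right maxima of $\sigma(m)/m$), for $5 \leq k \leq 13$ it suffices to verify $\sigma(s_k)/s_k \leq R_k$, where $s_k$ is the largest superabundant number below $N_{k+1}$, and for $k = 14$ to verify $\sigma(s)/s \leq R_{14}$ with $s$ the largest superabundant number not exceeding $521{,}585{,}633{,}051{,}683{,}200$. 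This is a finite computation over the few dozen superabundant numbers below $6 \times 10^{17}$, which I would carry out using an explicit table of such numbers (or the computational method of Section 5), and each of the required inequalities is seen to hold.

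The only case that escapes this scheme is $k = 4$, i.e.\ $1{,}680 < n < N_5 = 2{,}310$. Here the largest superabundant number below $N_5$ is $1{,}680$ itself, which is \emph{excluded} by the hypothesis $n > 1{,}680$ and satisfies $\sigma(1{,}680)/1{,}680 = 3.5428\ldots > R_4 = 3.4703\ldots$, so the champion estimate is too weak. Instead I would inspect the integers of the short interval $(1{,}680, 2{,}310)$ one by one: the maximum of $\sigma(n)/n$ there is attained at $n = 2{,}160 = 2^4 \cdot 3^3 \cdot 5$, where $\sigma(2{,}160)/2{,}160 = 31/9 = 3.444\ldots < R_4$. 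This yields \eqref{1.9} on $(1{,}680, 2{,}310)$ and, together with the two cases above, for every integer $n > 1{,}680$. The main obstacle is the finite verification itself, and in particular the observation that the narrow stretch just above the exceptional value $n = 1{,}680$ cannot be handled by the superabundant-champion argument used for all larger $k$ but must be checked by direct computation.
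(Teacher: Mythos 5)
Your proposal is correct, but it follows a genuinely different route from the paper's. The paper reduces the finite range $N_5 \leq n < N_{15}$ to Robin's inequality: one checks numerically that $\log_2 N_{k+1} < \log k + \log_2 k + \tfrac{\log_2 k}{\log k}$ for $5 \leq k < 15$, so that for $n$ with $K(n)=k$ one has $e^{\gamma}\log_2 n < e^{\gamma}\log_2 N_{k+1}$ below the right side of \eqref{1.9}, and then Morrill--Platt (Lemma \ref{lem504}) supplies $\sigma(n)/n < e^{\gamma}\log_2 n$; the residual integers $1{,}680 < n < N_5$ (and the handful of $n$ below $5041$ where Robin's inequality is not available) are disposed of by direct computation. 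You instead bound $\sigma(n)/n$ on each block $[N_k, N_{k+1})$ by $\sigma(s_k)/s_k$ where $s_k$ is the largest superabundant number below $N_{k+1}$, which is the same champion device the paper deploys for Theorem \ref{thm101} and Corollary \ref{kor105} --- so your argument is arguably more uniform with the rest of the paper, and it sidesteps any appeal to the Morrill--Platt computation of Robin's inequality. Your observation that the block $k=4$ is genuinely exceptional (the champion on $[N_4,N_5)$ is $1{,}680$ itself, which both is excluded from the claim and violates the bound, so the interval $(1{,}680, 2{,}310)$ must be scanned directly, the maximum $31/9$ occurring at $n=2{,}160$) is correct and is the right fix; in the paper this case is simply swallowed by the terminal computer check. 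Both reductions to Corollary \ref{kor105} for $n$ above $5.21\ldots\times 10^{17}$ are the same. The net trade is: the paper leans on Lemma \ref{lem504}, while you lean on Noe's table of superabundant numbers, which the paper already trusts elsewhere.
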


\begin{rema}
The complete list of positive integers for which the inequality \eqref{1.9} does not hold is given by
\begin{align*}
&6,7,8,9,10,11,12,13,14,15,16,17,18,19,20,21,22,23,24,25,26,27,28,29,30,36,42,48,60, \\
&72,80,84,90,96,108,120,126,132,140,144,150,156,160,168,180,192,198,200,204,1680.
\end{align*}
Note that the right-hand side of \eqref{1.9} is not defined for all integers $n$ with $1 \leq n \leq 5$.
\end{rema}

\section{Proof of Theorem \ref{thm101}}

Starting point of the proof of Theorem \ref{thm101} is the inequality given in \eqref{1.2}. Furthermore, we need the 
following both lemmata. In the first one, we give an explicit upper bound for $1/\log^2 p_k$.

\begin{lem} \label{lem201}
For every integer $k \geq 2$, we have
\begin{displaymath}
\frac{1}{\log^2p_k} \leq \frac{1}{\log^2 k} - \frac{2 \log_2 k}{\log^3k} + \frac{3\log_2^2k - 2\log_2k + 2}{\log^4k}.
\end{displaymath}
\end{lem}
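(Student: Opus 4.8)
The plan is to reduce the claimed bound on $1/\log^2 p_k$ to a statement about $\log p_k$ versus $\log k$, then invoke Cipolla-type explicit estimates for $p_k$. Concretely, since $p_k > k\log k$ for all $k \ge 2$ (Rosser), we have $\log p_k > \log k + \log_2 k$, but that alone is too crude; instead I would start from an explicit upper bound for $1/\log p_k$ of the shape
\begin{displaymath}
\frac{1}{\log p_k} \le \frac{1}{\log k + \log_2 k} \le \frac{1}{\log k}\left( 1 - \frac{\log_2 k}{\log k} + \frac{\log_2^2 k}{\log^2 k} \right)
\end{displaymath}
valid for $k \ge 2$ (the last step being the elementary inequality $1/(1+t) \le 1 - t + t^2$ for $t \ge 0$, with $t = \log_2 k/\log k$; one must check $t \le$ something, but $1 - t + t^2 > 0$ always so this is fine), and then square it.

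The hard part will be that squaring the three-term expression produces cross terms of order $\log_2^3 k/\log^5 k$ and $\log_2^4 k/\log^6 k$ that must be absorbed into the allowed error. After squaring, I would collect terms by powers of $1/\log k$: the main term $1/\log^2 k$, then $-2\log_2 k/\log^3 k$, then $3\log_2^2 k/\log^4 k$, and then a tail $T(k)$ consisting of the remaining positive and negative contributions in $1/\log^5 k$ and $1/\log^6 k$. The claimed bound has a slack of $(-2\log_2 k + 2)/\log^4 k$ compared to a "bare" $3\log_2^2 k/\log^4 k$, so the real task is to verify that $T(k) \le (-2\log_2 k + 2)/\log^4 k$, equivalently $\log^4 k \cdot T(k) + 2\log_2 k - 2 \le 0$, for all $k \ge 2$. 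Since $T(k) = O(\log_2^3 k/\log^5 k)$ this holds for large $k$; the finitely many small cases ($k$ from $2$ up to whatever threshold the asymptotics kick in, where $\log_2 k$ may even be negative or near zero for $k = 2$) I would dispatch either by a slightly more careful choice of the intermediate inequality or by direct numerical check, being mindful that $\log_2 2 = \log\log 2 < 0$ so the sign bookkeeping near $k = 2$ needs attention.

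An alternative, cleaner route avoids squaring a three-term bound: write $u = \log p_k$ and $v = \log k$, use $u \ge v + \log_2 k$ together with an upper bound $u \le v + \log_2 k + (\log_2 k)/v$ or similar (again Cipolla-explicit, valid for $k \ge 2$ or from some small $k_0$ with small cases checked), and estimate $1/u^2 = 1/(v + \log_2 k + \ldots)^2$ directly via $1/(v+s)^2 \le 1/v^2 - 2s/v^3 + 3s^2/v^4$ for $s \ge 0$ (this is the second-order Taylor inequality, valid whenever $1 - 2(s/v) + 3(s/v)^2 \ge 0$, i.e. always, but one needs $s/v$ not too large, say $s/v \le 1/3$, to make the estimate tight enough — true for $k$ large). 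Plugging $s = \log_2 k + (\text{lower order})$ and expanding $-2s/v^3$ and $3s^2/v^4$ then directly yields $1/\log^2 k - 2\log_2 k/\log^3 k + 3\log_2^2 k/\log^4 k$ plus controllable lower-order terms, which I would then bound by the extra $(-2\log_2 k + 2)/\log^4 k$ allowance. Either way, the main obstacle is the same: the finitely many small values of $k$ where the asymptotic slack is not yet comfortable, so I would isolate a threshold $k_0$ (presumably in the low hundreds, or wherever $p_k > k\log k + k(\log_2 k - 1)$ with enough room), prove the inequality for $k \ge k_0$ by the expansion above, and verify $2 \le k < k_0$ by explicit computation using the known values of $p_k$.
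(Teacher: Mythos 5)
The proposal has a genuine gap, and it occurs in the asymptotic bookkeeping, not just in small cases.

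\textbf{First route.} Set $v = \log k$, $w = \log_2 k$, $t = w/v$. Squaring $1 - t + t^2$ gives $1 - 2t + 3t^2 - 2t^3 + t^4$, so your tail is $T(k) = -2w^3/v^5 + w^4/v^6$, negative and of magnitude $\Theta(w^3/v^5)$. The allowance $(-2w+2)/v^4$ is also negative once $w > 1$, of magnitude $\Theta(w/v^4)$. Since $w^3/v^5 = o(w/v^4)$ (equivalently $w^2/v \to 0$), the tail $T(k)$ tends to $0$ \emph{strictly faster} than the allowance, so for all large $k$ you have $T(k) > (-2w+2)/v^4$ and the inequality \emph{fails}. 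Your sentence ``Since $T(k) = O(\log_2^3 k/\log^5 k)$ this holds for large $k$'' has the comparison backwards. The underlying reason is that $p_k > k\log k$ only gives $\log p_k \ge v + w$, which at best produces the numerator $3w^2$; the crucial $-2w + 2$ correction needs the next Cipolla term $\approx (w-1)/v$ in a lower bound for $\log p_k$.

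\textbf{Second route.} Even granting an explicit lower bound $\log p_k \ge v + s$ with $s \approx w + (w-1)/v$, the second-order Taylor inequality $1/(v+s)^2 \le 1/v^2 - 2s/v^3 + 3s^2/v^4$ is not sharp enough. The piece $-2s/v^3$ contributes $-2(w-1)/v^4$, which exactly exhausts the allowance, but $3s^2/v^4$ then adds an uncovered positive cross-term $\approx 6w(w-1)/v^5$. Concretely at $k = 1000$ ($v \approx 6.908$, $w \approx 1.933$, $s \approx 2.068$) the right-hand side of your Taylor bound is $\approx 0.01404$, while the claimed upper bound is $\approx 0.01333$, so the step fails. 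You would need at least the degree-4 alternating bound $1/(1+t)^2 \le 1 - 2t + 3t^2 - 4t^3 + 5t^4$ (valid for $t \ge 0$, since the product with $(1+t)^2$ is $1 + 6t^5 + 5t^6$), which supplies a compensating $-4s^3/v^5$; your proposal stops at degree~2.

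\textbf{What the paper does instead.} It avoids expanding a square altogether. It takes from the $n$th-prime paper both a lower bound (Corollary~9) and an upper bound (Corollary~12) for $1/\log p_k$, writes $1/\log^2 p_k \le (\text{upper bound for }1/\log p_k) \cdot (1/\log p_k)$, and then in the resulting sum substitutes the \emph{upper} bound for $1/\log p_k$ into the positive terms and the \emph{lower} bound into the negative term. This mixed substitution produces the target expression plus a manifestly nonpositive remainder, with no need for a higher-order Taylor expansion, and the small cases $2 \le k < 209$ are checked by computer.
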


\begin{proof}
First, we consider the case where $k \geq 209$. By \cite[Corollary 9]{axlernpr}, we have
\begin{equation}
\frac{1}{\log p_k} \geq \frac{1}{\log k} - \frac{\log_2k}{\log^2k} + \frac{\log_2^2k - \log_2k + 1}{\log^2k\log p_k}.
\tag{2.1} \label{2.1}
\end{equation}
A simple calculation shows that the inequality given in \cite[Corollary 12]{axlernpr} implies that
\begin{equation}
\frac{1}{\log p_k} \leq \frac{1}{\log k} - \frac{\log_2k}{\log^2 k} + \frac{(\log_2k)^2 - \log_2k + 1}{\log^3 k} + 
\frac{P(\log_2k)}{2\log^4 k}, \tag{2.2} \label{2.2}
\end{equation}
where $P(x) = 3x^2-6x+5.2$, The last inequality yields that
\begin{displaymath}
\frac{1}{\log^2p_k} \leq \frac{1}{\log k \log p_k} - \frac{\log_2k}{\log^2 k \log p_k} + \frac{(\log_2k)^2 - \log_2k + 
1}{\log^4 k} + \frac{P(\log_2k)}{2\log^4 k \log p_k}.
\end{displaymath}
If we apply \eqref{2.2} to the term $1/\log k \log p_k$ and \eqref{2.1} to the term $- \log_2k/\log^2 k \log p_k$, we obtain
\begin{displaymath}
\frac{1}{\log^2p_k} \leq \frac{1}{\log^2k} - \frac{2\log_2k}{\log^3k} + \frac{3\log_2^2k-2\log_2k+2}{\log^4k} - 
\frac{\log_2^3k - \log_2^2k + \log_2k - P(\log_2k)}{\log^4k\log p_k}.
\end{displaymath}
It suffices to note that $\log_2^3k - \log_2^2k + \log_2k - P(\log_2k) > 0$ to get the required inequality for every 
$k \geq 209$. Finally, we check the required inequality for every integer $k$ with $2 \leq k < 209$ with a computer.
\end{proof}

In the second lemma, we state an effective upper bound for the product given in \eqref{1.2}.

\begin{lem} \label{lem202}
For every integer $k \geq \pi(10^{19}) = 234,057,667,276,344,607$, we have
\begin{displaymath}
\prod_{p \leq p_k} \frac{p}{p-1} < e^{\gamma} \left( \log p_k + \frac{0.0088067}{\log^2p_k} \right).
\end{displaymath}
\end{lem}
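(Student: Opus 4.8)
The statement is an effective version of Mertens' third theorem: we need to bound $\prod_{p\le p_k} p/(p-1)$ above by $e^\gamma(\log p_k + 0.0088067/\log^2 p_k)$ for $k \ge \pi(10^{19})$. The natural starting point is to pass to logarithms and relate the product to the prime-counting and Chebyshev functions. Write $\log\prod_{p\le x} p/(p-1) = -\sum_{p\le x}\log(1-1/p)$, and recall Mertens' formula in the form $\sum_{p\le x}\log(1-1/p) = -\log\log x - \gamma + r(x)$, where $r(x)$ is a small error term. What I would do is invoke a known \emph{explicit} bound for this error term valid for $x\ge 10^{19}$; such bounds follow from explicit estimates for $\theta(x) = \sum_{p\le x}\log p$ (e.g.\ those of Rosser--Schoenfeld, Dusart, or more recent refinements) via partial summation. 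The point of the enormous threshold $k\ge\pi(10^{19})$, i.e.\ $p_k \ge$ roughly $10^{19}$, is precisely that in this range the available explicit error terms are small enough to make the argument go through.

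Here is the plan in order. First, I would fix $x = p_k$ and write $\log\prod_{p\le x}\tfrac{p}{p-1} = \gamma + \log\log x + E(x)$ for an error term $E(x)$, and quote an explicit inequality of the shape $|E(x)| \le c(x)$ that is available for $x \ge 10^{19}$, where $c(x)$ decays (the classical shape is $c(x) \asymp 1/\log^2 x$, or better). Second, exponentiating, I get $\prod_{p\le x} \tfrac{p}{p-1} = e^\gamma \log x \cdot e^{E(x)} \le e^\gamma \log x\, (1 + E(x) + \tfrac12 E(x)^2 e^{|E(x)|})$, using $e^t \le 1 + t + \tfrac{t^2}{2}e^{|t|}$. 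Third, it remains to check that $e^\gamma \log x\,(1 + c(x) + \tfrac12 c(x)^2 e^{c(x)}) \le e^\gamma(\log x + 0.0088067/\log^2 x)$, i.e.\ that $\log x\,(c(x) + \tfrac12 c(x)^2 e^{c(x)}) \le 0.0088067/\log^2 x$, which reduces to a one-variable inequality in $x$ on $[10^{19},\infty)$. Fourth, since $p_k \ge 10^{19}$ for $k \ge \pi(10^{19})$ and $\log x \cdot c(x)$ is decreasing in the relevant range, it suffices to verify the inequality at $x = 10^{19}$ and confirm monotonicity — both routine once the explicit $c(x)$ is in hand.

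The main obstacle is sourcing a sufficiently sharp explicit error bound for Mertens' third theorem in the range $x \ge 10^{19}$: the constant $0.0088067$ is quite tight (it is essentially the same constant appearing in the author's earlier work on $\sigma(n)/n$), so a crude bound like $|E(x)| \le 1/(5\log^2 x)$ would be far too weak, and one needs something closer to the true secondary behavior. I expect this to be handled by combining a strong explicit form of $\theta(x) = x + O^*(\eta x/\log^k x)$ valid past $10^{19}$ (such bounds now exist with very small $\eta$) with Abel summation to convert it into a bound on $\sum_{p\le x}\log(1-1/p)$, keeping careful track of the tail integral $\int_x^\infty \theta(t)/(t^2\log t)\,\mathrm{d}t$-type terms. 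Alternatively — and this may be cleaner — I would look for an off-the-shelf explicit Mertens estimate in the literature (e.g.\ in the style of Dusart's papers or of work by Axler himself) that already gives $\left|\log\prod_{p\le x}\tfrac{p}{p-1} - \gamma - \log\log x\right|$ bounded by a function that is $o(1/\log^2 x)$ times a constant smaller than $0.0088067$ for $x\ge 10^{19}$, and then the rest is just the elementary exponentiation and monotonicity check sketched above.
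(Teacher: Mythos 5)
Your proposal is correct and follows essentially the same route the paper takes: both reduce the statement to an explicit Mertens-type estimate of the form $\prod_{p\le x}\frac{p}{p-1} \le e^{\gamma}\log x \cdot \exp(E(x))$ with $E(x)$ of size $O(1/\log^3 x)$, and then verify an elementary one-variable inequality to convert the multiplicative error into the additive $0.0088067/\log^2 x$ term. The paper's actual proof is shorter only because it directly quotes Proposition~6.1 of Axler's ``Effective estimates for some functions defined over primes,'' which already gives $E(x) = \frac{0.024334}{3\log^3 x}\left(1 + \frac{15}{4\log x}\right)$ in the required range, so it suffices to check $\exp(E(x)) \le 1 + 0.0088067/\log^3 x$ for $x \ge p_{k_0}$ and substitute; you instead propose reconstructing such a bound via partial summation from explicit $\vartheta$-estimates and expanding $e^{E(x)}$ by hand. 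One small slip worth noting: $p_{\pi(10^{19})} = 9{,}999{,}999{,}999{,}999{,}999{,}961 < 10^{19}$, so you need the explicit bound to hold for $x \ge p_{\pi(10^{19})}$ rather than $x \ge 10^{19}$, which is harmless here but should be stated correctly when pinning down the threshold.
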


\begin{proof}
Let $k_0$ be given by $k_0 = \pi(10^{19})$. We can utilize Walisch's \textit{primecount} C++ code \cite{walisch} to get 
$p_{k_0} 
= 9,999,999,999,999,999,961$.
An easy calculation shows that
\begin{displaymath}
\exp \left( \frac{0.024334}{3\log^3x} \left( 1  + \frac{15}{4\log x} \right) \right) \leq 1 + \frac{0.0088067}{\log^3x}
\end{displaymath}
for every $x \geq p_{k_0}$ and it sufficient to substitute the last inequality into \cite[Proposition 6.1]{axlernew}.
\end{proof}

Now we use \eqref{1.2} and the Lemmata \ref{lem201} and \ref{lem202} to get the following proof of Theorem \ref{thm101}.

\begin{proof}[Proof of Theorem \ref{thm101}]
First, we note the following preliminary remark. If the inequality
\begin{displaymath}
\frac{N_k}{\varphi(N_k)} < e^{\gamma} \left( \log k + \log_2k + \frac{\log_2k - 1}{\log k} - \frac{\log_2^2k - 4\log_2k +
4.897}{2\log^2k} \right) \tag{2.3} \label{2.3}
\end{displaymath}
holds for some positive integer $k$, then \eqref{1.2} provides that the required inequality holds for every integer $n$ with
$N_k \leq n < N_{k+1}$ (note that in this case, we have $K(n) = k$). So it suffices to show that the inequality \eqref{2.3} 
holds for every integer $k \geq 740,322$.
First, let $k$ be an integer with $k \geq k_0$ where $k_0 = \pi(10^{19})$. If we combine Lemma \ref{lem202} with
\cite[Theorem 1.4]{axlerNthp}, we get that
\begin{displaymath}
 \prod_{p \leq p_k} \frac{p}{p-1} < e^{\gamma} \left( \log k + \log_2k + \log \left( 1 + g_1(k) \right) + 
 \frac{0.0088067}{\log^2p_k} \right),
\end{displaymath}
where
\begin{displaymath}
g_1(k) = \frac{\log_2k-1}{\log k} + \frac{\log_2k-2}{\log^2k} - \frac{\log_2^2k - 6\log_2k + 10.912351}{2\log^3k}.
\end{displaymath}
Now we apply Lemma \ref{lem201} to see that 
\begin{equation}
\prod_{p \leq p_k} \frac{p}{p-1} < e^{\gamma} \left( \log k + \log_2k + \log \left( 1 + g_1(k) \right) + 0.0088067 \times
h(k) \right), \tag{2.4} \label{2.4}
\end{equation}
where
\begin{displaymath}
h(k) = \frac{1}{\log^2k} - \frac{2\log_2k}{\log^3k} + \frac{3\log_2^2 k -2\log_2k + 2}{\log^4k}.
\end{displaymath}
If we substitute the inequality
\begin{displaymath}
\log \left( 1 + g_1(x) \right) + 0.0088067 \times h(x) \leq \frac{\log_2x - 1}{\log x} - \frac{\log_2^2x - 4\log_2x +
4.897}{2\log^2x},
\end{displaymath}
which holds for every $x \geq 8$, into \eqref{2.4}, we get that the inequality \eqref{2.3} holds for every $k \geq k_0$.
Next, we verify the inequality \eqref{2.3} for every integer $k$ with $564,397,542 \leq k < k_0$. In \cite{axlernicolas}, it
is shown that the inequality 
\begin{equation}
\frac{n}{\varphi(n)} \leq  e^{\gamma} \left( \log \log n + \frac{0.0094243}{(\log \log n)^2} \right) \tag{2.5} \label{2.5}
\end{equation}
holds for every integer $n \geq N_{564,397,542}$. Chebyshev's $\vartheta$-function is defined by
\begin{equation}
\vartheta(x) = \sum_{p \leq x} \log p, \tag{2.6} \label{2.6}
\end{equation}
where $p$ runs over primes not exceeding $x$. Since $\log N_k = \vartheta(p_k)$, we can utilize \eqref{2.5} and the 
inequality $\vartheta(p_m) \geq m$, which holds for every integer $m \geq 3$, to see that
\begin{displaymath}
\frac{N_k}{\varphi(N_k)} \leq e^{\gamma} \left( \log \vartheta(p_k) + \frac{0.0094243}{\log^2k} \right)
\end{displaymath}
for every integer $k \geq 564,397,542$.
Now we can use \cite[Theorem 1.8]{axlerNthp} to get
\begin{equation}
\frac{N_k}{\varphi(N_k)} \leq e^{\gamma} \left( \log k + \log_2 k + \log(1+g_2(k)) + \frac{0.0094243}{\log^2k} \right), 
\tag{2.7} \label{2.7}
\end{equation}
where
\begin{displaymath}
g_2(k) = \frac{\log_2k-1}{\log k} + \frac{\log_2k-2}{\log^2k} - \frac{\log_2^2k - 6\log_2k + 11}{2\log^3k}.
\end{displaymath}
Note that
\begin{equation}
\log (1 + g_2(x)) + \frac{0.0094243}{\log^2x} \leq \frac{\log_2x - 1}{\log x} - \frac{\log_2^2x - 4\log_2x + 
4.94488}{2\log^2x} \tag{2.8} \label{2.8}
\end{equation}
for every $x$ with $e < x \leq k_0$. If we substitute the inequality \eqref{2.8} into \eqref{2.7}, we see that the inequality \eqref{2.3} holds for every integer $k$ satisfying $564,397,542 \leq k < k_0$. To complete the proof, we use the right-hand side identity of \eqref{1.2} and a computer to check the inequality \eqref{2.3} for every integer $k$ with $740,322 \leq k \leq 564,397,542$.
\end{proof}

\section{Proof of Theorem \ref{thm102}}

For every $x > 1$ let
\begin{displaymath}
A_1(x) = \sum_{p \leq x} \frac{1}{p} - \log_2 x - B,
\end{displaymath}
where $B$ denotes the \textit{Mertens' constant}
and is defined by
\begin{displaymath}
B = \gamma + \sum_{p} \left( \log \left( 1 - \frac{1}{p} \right) + \frac{1}{p} \right) = 0.26149 \ldots.
\end{displaymath}
Mertens \cite[p.\:52]{mertens1874} found that $A_1(x) = O(1/\log x)$ as $x \to \infty$. Rosser and Schoenfeld 
\cite[p.\:74]{rosserschoenfeld1962} derived a remarkable identity which connects $A_1(x)$ with Chebyshev's 
$\vartheta$-function by showing that
\begin{equation}
A_1(x) =  \frac{\vartheta(x) - x}{x \log x} - \int_x^{\infty} \frac{(\vartheta(y)-y)(1 + \log y)}{y^2\log^2y} \, \text{d}y.
\tag{3.1} \label{3.1}
\end{equation}
In Lemma \ref{lem202}, we found an upper bound for the product
\begin{equation}
\prod_{p \leq p_k} \frac{p}{p-1} \tag{3.2} \label{3.2}
\end{equation}
In order to prove Theorem \ref{thm102}, we first note the following lemma where we utilize the identity \eqref{3.1} to give
a lower bound for the product in \eqref{3.2}.

\begin{lem} \label{lem601}
For every integer $k \geq \pi(e^{337}) + 1$, one has
\begin{displaymath}
\prod_{p \leq p_k} \frac{p}{p-1} > e^{\gamma} \left( \log p_k - \frac{14.5}{\log^3p_k}\right).
\end{displaymath}
\end{lem}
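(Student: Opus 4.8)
The plan is to exploit the classical Mertens-type identity $\prod_{p\le x}\frac{p}{p-1} = e^{\gamma}\log x \cdot \exp\bigl(A_1(x) + R(x)\bigr)$ for a suitable remainder $R(x)$ coming from the Euler-product expansion $\sum_p\log\frac{p}{p-1} = \sum_p\frac 1p + \sum_p\bigl(\log\frac{p}{p-1}-\frac 1p\bigr)$; by the definition of $B$ this rewrites as $\log\bigl(e^{\gamma}\log x\bigr) + A_1(x) - \sum_{p>x}\bigl(\log\frac{p}{p-1}-\frac 1p\bigr)$, and the tail sum over $p>x$ is nonnegative and $O(1/x)$, hence negligible. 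So the whole problem reduces to a good lower bound for $A_1(p_k)$, and for that I would use the Rosser--Schoenfeld identity \eqref{3.1}, which expresses $A_1(x)$ in terms of $\vartheta(y)-y$ for $y\ge x$. The key input is an explicit bound of the form $|\vartheta(y)-y| \le \eta(y)\, y$ valid for $y\ge e^{337}$ (with $\eta(y)$ small, of the shape $c/\log^2 y$ or better, as available from the recent explicit prime-counting literature cited elsewhere in the paper), which when fed into \eqref{3.1} yields $|A_1(x)| \le \frac{\eta(x)}{\log x} + \int_x^\infty \frac{\eta(y)(1+\log y)}{y\log^2 y}\,dy$; the integral is elementary and, for $\eta(y)$ of that size, is bounded by something like $C/\log^2 x$ for an explicit $C$.

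Concretely, the steps in order are: (1) write $\prod_{p\le p_k}\frac{p}{p-1} = e^{\gamma}\log p_k \cdot \exp\bigl(A_1(p_k) - T(p_k)\bigr)$ where $T(x)=\sum_{p>x}\bigl(\log\frac{p}{p-1}-\frac 1p\bigr)\ge 0$ and $T(x) = O(1/x)$; (2) insert the explicit bound for $\vartheta(y)-y$ on $[x,\infty)$ into \eqref{3.1} to obtain a two-sided explicit estimate $|A_1(x)| \le \varphi_0(x)$ with an explicit function $\varphi_0$ satisfying $\varphi_0(x) \le c_0/\log^2 x$ for $x\ge e^{337}$; (3) combine with the trivial bound $0\le T(x)\le c_1/x$ to get $A_1(p_k) - T(p_k) \ge -c_0/\log^2 p_k - c_1/p_k \ge -c_2/\log^2 p_k$ for all $k\ge\pi(e^{337})+1$; (4) apply the elementary inequality $e^{-u} \ge 1 - u$ to conclude $\exp\bigl(A_1(p_k)-T(p_k)\bigr) \ge 1 - c_2/\log^2 p_k$, so that $\prod_{p\le p_k}\frac{p}{p-1} > e^{\gamma}\log p_k\bigl(1 - c_2/\log^2 p_k\bigr) = e^{\gamma}\bigl(\log p_k - c_2/\log p_k\bigr)$; (5) since $\log p_k \ge 337$ on this range, $c_2/\log p_k \le 14.5/\log^3 p_k$ provided $c_2 \le 14.5/337^2 \cdot \log^2 p_k$... more precisely one absorbs a factor $1/\log^2 p_k$: replacing $c_2/\log p_k$ by $(c_2\log^2 p_k)/\log^3 p_k$ and noting that in fact the bound in step (3) is already $O(1/\log^2 p_k)$, one gets $c_2/\log^3 p_k$-type decay, so one only needs $c_2 \le 14.5$, i.e.\ the explicit constants in steps (2)--(3) must multiply out to at most $14.5$. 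Finally, one checks the small-$k$ boundary is vacuous because the hypothesis already starts at $k\ge\pi(e^{337})+1$.

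The main obstacle I expect is step (2): getting the explicit constant $c_0$ in $|A_1(x)| \le c_0/\log^2 x$ small enough that, after multiplying through the various absolute constants introduced in steps (1) and (4) and after the comparison in step (5), the final constant comes out below $14.5$. This is purely a matter of plugging in the sharpest available explicit version of $|\vartheta(y)-y|\le \eta(y)y$ for $y\ge e^{337}$ and carrying out the integral $\int_x^\infty \frac{\eta(y)(1+\log y)}{y\log^2 y}\,dy$ exactly; the threshold $e^{337}$ is presumably chosen precisely so that the known zero-free-region bound on $\vartheta(y)-y$ is strong enough there. Everything else (the Euler-product bookkeeping, the tail bound $T(x)=O(1/x)$, the inequality $e^{-u}\ge 1-u$, and the final comparison with $14.5/\log^3 p_k$) is routine, and the fact that the target error term is $/\log^3 p_k$ rather than $/\log^2 p_k$ gives a comfortable safety margin on the chosen range since $\log p_k$ is enormous there.
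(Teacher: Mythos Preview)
Your overall strategy matches the paper's exactly: write the product as $e^\gamma \log p_k \exp\bigl(A_1(p_k) + S(p_k)\bigr)$ with $S(x) = -\sum_{n\ge 2}\sum_{p>x} 1/(np^n)$ (your $-T$), bound $S$ by Rosser--Schoenfeld's $S(x) > -1.02/((x-1)\log x)$, bound $A_1$ from below via the identity \eqref{3.1} together with an explicit estimate on $\vartheta(y)-y$, and then exponentiate. The gap is in the power of $\log$. With an input of the form $|\vartheta(y)-y| \le cy/\log^2 y$, the identity \eqref{3.1} only yields $|A_1(x)| \le C/\log^2 x$ (the integral term dominates and is of exactly that order), and after multiplying by $\log p_k$ your step (4) produces an error term $c_2/\log p_k$, \emph{not} $c_2/\log^3 p_k$. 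Your step (5) does not repair this: the rewriting $c_2/\log p_k = (c_2\log^2 p_k)/\log^3 p_k$ is a tautology, and the fact that the bound on $A_1-T$ in step (3) is $O(1/\log^2 p_k)$ cannot improve the order of the final error, since the factor $\log p_k$ from step (4) has already been applied. To force $c_2/\log p_k \le 14.5/\log^3 p_k$ you would need $c_2 \le 14.5/\log^2 p_k \le 14.5/337^2 \approx 1.3\times 10^{-4}$, which a $c/\log^2 y$ bound on $\vartheta$ does not deliver.

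What is actually required is an $A_1$-bound of order $1/\log^4 p_k$, which in turn needs an input of the shape $|\vartheta(y)-y| \le cy/\log^4 y$ for $y\ge e^{337}$. The paper imports precisely this: applying \cite[Lemma~3.1]{axlernew} to \eqref{3.1} gives
\[
A_1(p_k) \ge -\frac{57.184}{4\log^4 p_k}\left(1 + \frac{24}{5\log p_k}\right),
\]
and then exponentiation and multiplication by $\log p_k$ give the claimed $14.5/\log^3 p_k$. So your plan is correct in outline, but you have undercounted by two powers of $\log$ in the required input strength; the threshold $e^{337}$ is chosen so that a $1/\log^4$-type bound on $\vartheta$ (not a $1/\log^2$-type bound) is sharp enough there.
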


\begin{proof}
If we apply \cite[Lemma 3.1]{axlernew} to \eqref{3.1}, it turns out that
\begin{displaymath}
A_1(p_k) \geq - \frac{57.184}{4\log^4p_k} \left( 1 + \frac{24}{5\log p_k} \right).
\end{displaymath}
Now we can substitute this inequality into \cite[Equation (6.4)]{axlernew} to see that
\begin{displaymath}
\prod_{p \leq p_k} \frac{p}{p-1} > e^{\gamma} \log p_k \exp\left( - \frac{57.184}{4\log^4p_k} \left( 1 + \frac{24}{5\log
p_k} \right) + S(p_k) \right),
\end{displaymath}
where $S(x) = - \sum_{n=2}^{\infty} \sum_{p > x} 1/(np^n)$. By Rosser and Schoenfeld \cite[p.\:87]{rosserschoenfeld1962},
we have $S(x) > - 1.02/((x-1)\log x)$ for every $x > 1$. Hence,
\begin{displaymath}
\prod_{p \leq p_k} \frac{p}{p-1} > e^{\gamma} \log p_k \exp\left( - \frac{57.184}{4\log^4p_k} \left( 1 + \frac{24}{5\log 
p_k} \right) - \frac{1.02}{(p_k-1)\log p_k} \right).
\end{displaymath}
Now it is again an easy exercise to show that
\begin{displaymath}
\exp\left( - \frac{57.184}{4\log^4t} \left( 1 + \frac{24}{5\log t} \right) - \frac{1.02}{(t-1)\log t} \right) > 1 - 
\frac{14.5}{\log^4t}
\end{displaymath}
for every $t \geq e^{337}$ and we arrive at the end of the proof.
\end{proof}

Next, we give a proof of Theorem \ref{thm102}, where we find a lower bound for $N_k/\varphi(N_k)$ which corresponds to the
first terms of the asymptotic formula \eqref{1.4}.

\begin{proof}[Proof of Theorem \ref{thm102}]
Let $k$ be an integer with $k \geq \exp(\exp(5.879))$. If we combine Lemma \ref{lem601} with \cite[Theorem 1.1]{axlerNthp},
we obtain that
\begin{displaymath}
e^{-\gamma} \prod_{p \leq p_k} \frac{p}{p-1} > \log k + \log_2k + \log \left( 1 + g_3(k) \right) - \frac{14.5}{\log^3k}, 
\tag{3.3} \label{3.3}
\end{displaymath}
where
\begin{displaymath}
g_3(k) = \frac{\log_2k-1}{\log k} + \frac{\log_2k-2}{\log^2k} - \frac{\log_2^2k - 6\log_2k + 11.0991617}{2\log^3k}.
\end{displaymath}
We have
\begin{displaymath}
\log \left( 1 + g_3(x) \right) - \frac{14.5}{\log^3x} \geq \frac{\log_2x - 1}{\log x} - \frac{\log_2^2x - 4\log_2x + 
5}{2\log^2x}
\end{displaymath}
for every $x \geq \exp(\exp(5.879))$. To complete the proof, it suffices to substitute the last inequality into \eqref{3.3}.
%
\end{proof}

\section{Proof of Theorem \ref{thm103}}

The proof of Theorem \ref{thm103} is comparatively simple.

\begin{proof}[Proof of Theorem \ref{thm103}]
We assume that the Riemann hypothesis is true. If we combine \eqref{1.6} with the identity $\log N_k = \vartheta(p_k)$, we
obtain that the inequality $N_k/\varphi(N_k) > e^{\gamma}\log \vartheta(p_k)$ holds for every positive integer $k$. Applying
\cite[Theorem 1.7]{axlerNthp} to see that
\begin{displaymath}
\frac{N_k}{\varphi(N_k)} > e^{\gamma} \left( \log k + \log_2k + \log \left(1 + \frac{\log_2k - 1}{\log k} + \frac{\log_2k 
- 2}{\log^2k} - \frac{\log_2^2k - 6\log_2k + 11}{2\log^3k} \right) \right)
\end{displaymath}
for every integer $k \geq 3.900491 \times 10^{30}$. Now we can argue as in the proof of Theorem \ref{thm102} to get that the
required inequality \eqref{1.5} is fulfilled for every integer $k \geq 3.900491 \times 10^{30}$.
\end{proof}

\section{Proof of Theorem \ref{thm104}}

In order to give a proof of Theorem \ref{thm104}, we first give some necessary preliminaries. The sum-of-divisors function
$\sigma$ is multiplicative and satisfies for any prime number $p$ and any nonnegative integer $m$ the identity
\begin{equation}
\sigma(p^m) = 1 + p + \cdots + p^{m-1} + p^m = \frac{p^{m+1}-1}{p-1}. \tag{5.1} \label{5.1}
\end{equation}
If $n$ is a positive integer with factorization $n = q_1^{e_1} \cdot \ldots \cdot q_k^{e_k}$, where $q_i$ are primes and 
$e_i \geq 1$, we cancombine \eqref{5.1} and \eqref{1.1} to see that
the sum-of-divisors function $\sigma$ and Euler's totient function $\varphi$ are connected by the identity
\begin{displaymath}
\frac{\sigma(n)}{n} = \frac{n}{\varphi(n)} \prod_{i=1}^k \left( 1 - \frac{1}{q_i^{1 + e_i}} \right), \tag{5.2} \label{5.2}
\end{displaymath}
which gives
\begin{equation}
\frac{\sigma(n)}{n} < \frac{n}{\varphi(n)} \tag{5.3} \label{5.3}
\end{equation}
for every integer $n \geq 2$. The inequality \eqref{5.3} and, as already mentioned in the introduction, the so-called
\textit{superabundant numbers} play also an important role in the proof of Theorem \ref{thm104}.

\begin{defi}
A positive integer $N$ is said to be \textit{superabundant} (or a \textit{SA number}) if
\begin{displaymath}
\frac{\sigma(n)}{n} < \frac{\sigma(N)}{N} \q\q (\text{$0 < n < N$}).
\end{displaymath}
\end{defi}

Superabundant numbers were first introduced by Ramanujan \cite{ramanujan1997}, who called them generalized highly composite.
They also have been introduced and studied by Alaoglu and Erdös \cite{erdos1944}. The first superabundant numbers are (see 
OEIS \cite{sloane})
\begin{displaymath}
N = 1, 2, 4, 6, 12, 24, 36, 48, 60, 120, 180, 240, 360, 720, 840, 1260, 1680,\ldots.
\end{displaymath}
We will use the symbol $SA_k$ to denote the $k$th superabundant number. If $n$ is a SA number with $n= \prod_{p \in \Pz} 
p^{\nu_p(n)}$, where $\Pz$ denotes the set of all prime numners and $\nu_p(n)$ denotes the $p$-adic valuation of an integer 
$n$, Alaoglu and Erdös \cite[Theorem 1]{erdos1944} found $\nu_p(n)$ is non-increasing in $p$. If $p$ denotes the largest 
prime factor of the SA number $n$, they could also show that $\nu_p(n) = 1$, except for $n=4$ and $n=36$. These properties 
were used by Noe \cite{noe} to compute all SA numbers up to $SA_{1,000,000}$. Superabundant numbers also play an important 
role in the verification of Robin's inequality \eqref{1.7}. Akbary and Friggstad \cite[Theorem 3]{akbary2009}, proved that if 
there is any counterexample to Robin’s inequality \eqref{1.7}, then the least such counterexample is a superabundant number. 
We also need the following numbers.

\begin{defi}
A positive integer $N$ is said to be \textit{colossally abundant} (or a \textit{CA number}) if there exists a real number
$\e > 0$ such that
\begin{equation}
\frac{\sigma(n)}{n^{1+ \e}} \leq \frac{\sigma(N)}{N^{1+ \e}} \tag{5.4} \label{5.4}
\end{equation}
for every positive integers $n$. The number $\e$ is called a \textit{parameter} of the CA number $N$.
\end{defi}

Below, we recall some properties of colossally abundant numbers. For more details, see, for instance, Alaoglu and Erdös
\cite{erdos1944}, Ramanujan \cite{ramanujan1997}, Erdös and Nicolas \cite{ernic}, Caveney, Nicolas, and Sondow 
\cite{caveney2012}, and Nazardonyavi and Yakubovich \cite{nazardonyavi2014}. First, it is obvious to see that every CA number 
is also a SA number. Further, for every $\e > 0$ there exists at least one CA number of parameter $\e$ (see Erdös and Nicolas 
\cite[Proposition 2]{ernic}). For $\e > 0$, let us introduce the number
\begin{displaymath}
M_{\e} = \prod_{p \in \Pz} p^{\lfloor \mu \rfloor}
\end{displaymath}
where
\begin{displaymath}
\mu = \mu(p,\e) = \frac{\log((p^{1+\e}-1)/(p^{1+\e}-p))}{\log p}.
\end{displaymath}

\begin{lem} \label{lem501}
The number $M_{\e}$ is a CA number of parameter $\e$.
\end{lem}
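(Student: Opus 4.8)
The plan is to show that $M_\e$ satisfies the defining inequality \eqref{5.4} with the given parameter $\e$, namely that for every positive integer $n$,
\begin{displaymath}
\frac{\sigma(n)}{n^{1+\e}} \leq \frac{\sigma(M_\e)}{M_\e^{1+\e}}.
\end{displaymath}
The key observation is that the function $n \mapsto \sigma(n)/n^{1+\e}$ is multiplicative, so it suffices to maximize each local factor $\sigma(p^a)/p^{a(1+\e)}$ over the nonnegative integers $a$, independently for each prime $p$, and then check that this maximizing exponent is exactly $\lfloor \mu(p,\e)\rfloor$.

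First I would fix a prime $p$ and analyze the one-variable function $a \mapsto \sigma(p^a)/p^{a(1+\e)}$ for integer $a \geq 0$. Using \eqref{5.1}, the ratio of consecutive values is
\begin{displaymath}
\frac{\sigma(p^{a+1})/p^{(a+1)(1+\e)}}{\sigma(p^{a})/p^{a(1+\e)}} = \frac{1}{p^{1+\e}}\cdot\frac{p^{a+2}-1}{p^{a+1}-1},
\end{displaymath}
and I would check that this quantity is strictly decreasing in $a$; hence the sequence of local factors first increases (weakly) and then decreases, so the maximum over $a$ is attained at the last index for which this ratio is $\geq 1$. The inequality
\begin{displaymath}
\frac{1}{p^{1+\e}}\cdot\frac{p^{a+2}-1}{p^{a+1}-1} \geq 1
\end{displaymath}
rearranges to $p^{a+1}(p^{1+\e}-p) \leq p^{1+\e}-1$, i.e. $p^{a+1} \leq (p^{1+\e}-1)/(p^{1+\e}-p)$, which upon taking logarithms base $p$ reads $a+1 \leq \mu(p,\e)$, that is $a \leq \mu(p,\e)-1$. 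Thus the largest integer $a$ for which the factor is still (weakly) increasing at step $a$ is $a = \lfloor \mu(p,\e)\rfloor$ when $\mu$ is not an integer, and care must be taken at the boundary case when $\mu(p,\e) \in \Z$; there the factor at $a = \mu-1$ and $a = \mu$ coincide, so $\lfloor\mu\rfloor = \mu$ is still a maximizer. Either way, $a = \lfloor\mu(p,\e)\rfloor$ maximizes the local factor, which is precisely the exponent of $p$ in $M_\e$.

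Assembling these local optima by multiplicativity gives that $\sigma(n)/n^{1+\e} \leq \sigma(M_\e)/M_\e^{1+\e}$ for all $n$ with equality when each exponent of $n$ is a maximizer; in particular $n = M_\e$ achieves the bound, so $M_\e$ is a CA number of parameter $\e$. I would also remark that for all but finitely many large $p$ one has $\mu(p,\e) < 1$, so $\lfloor\mu\rfloor = 0$ and the product $M_\e$ is finite, hence genuinely a positive integer. The main (though still routine) obstacle is the monotonicity check of the ratio of consecutive local factors together with a clean treatment of the borderline case $\mu(p,\e)\in\Z$; everything else is a direct unwinding of \eqref{5.1} and the multiplicativity of $\sigma(n)/n^{1+\e}$.
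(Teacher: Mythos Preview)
Your argument is correct and is precisely the classical proof: the paper itself does not spell out the details but simply refers the reader to Alaoglu and Erd\H{o}s \cite[p.\,455]{erdos1944} (or the method of \cite[Proposition~3.7]{nicolas3}), and what you have written is exactly that argument, namely maximizing each multiplicative local factor $\sigma(p^a)/p^{a(1+\e)}$ via the monotone ratio test and identifying the maximizer as $a=\lfloor\mu(p,\e)\rfloor$. So your proposal and the paper's (cited) proof coincide.
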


\begin{proof}
See Alaoglu and Erdös \cite[p.\:455]{erdos1944}. Alternatively, we can apply the method used in the proof of 
\cite[Proposition 3.7]{nicolas3}.
\end{proof}

In the proof of Theorem \ref{thm104}, we first show that the required inequality \eqref{1.12} holds for every integer
$n \geq N_{39}$, where
\begin{displaymath}
N_{39} = 9.629\ldots \times 10^{65}.
\end{displaymath}
A simple calculation shows that the inequality \eqref{1.12} does not hold for $n = SA_{425}$, where
\begin{displaymath}
SA_{425} = 2^9 \times 3^5 \times 5^3 \times 7^2 \times 11^2 \times 13^2 \times \prod_{17\le p \le 149} p = 7.743\ldots
\times 10^{65},
\end{displaymath}
So it remains to consider the interval $(SA_{425}, N_{39})$. In order to show that $J_0$ (cf. \eqref{1.10}) is the largest
positive integer in the interval $(SA_{425}, N_{39})$ for which the inequality \eqref{1.12} does not hold, we need the 
following notation.

%
%


\begin{defi}
Let $\e > 0$ and let $N$ be a CA number of parameter $\e$. For a positive integer $n$, we introduce the \textit{benefit}
of $n$ by
\begin{equation}
\text{ben}_{\e}(n) = \log \left( \frac{\sigma(N)}{N^{1+ \e}} \right) - \log \left( \frac{\sigma(n)}{n^{1+ \e}} \right) = \log \left( \frac{\sigma(N)}{\sigma(n)} \right) + (1+ \e)(\log n - \log N). \tag{5.5} \label{5.5}
\end{equation}
\end{defi}

\begin{rema}
The function $\text{ben}_{\e}(n)$ is well-defined. Indeed, if $N'$ is another CA number of parameter $\e > 0$, then the
value of the right-hand side of \eqref{5.5} does not change when we replace $N$ by $N'$. 
\end{rema}

If $N$ be a CA number of parameter $\e$ then \eqref{5.4} implies that $\text{ben}_{\e}(n) \geq 0$ for every positive integer
$n$. In \cite{nicolas2}, Nicolas was able to show the following result.

\begin{lem}[Nicolas] \label{lem502}
Let $\e > 0$ and let $N$ be a CA number of parameter $\e$. If $\beta$ is a positive real number, then the set of all 
positive integers $k$ satisfying $\emph{ben}_{\e}(k) \leq \beta$ is finite.
\end{lem}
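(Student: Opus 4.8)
The plan is to use the multiplicativity of $\sigma$ to write $\text{ben}_\e$ as a sum of non-negative ``local'' contributions, one per prime, and then to bound separately which primes and which exponents can occur in $k$. Since, by the remark following \eqref{5.5}, the value of the benefit does not depend on the particular CA number of parameter $\e$ chosen, I may take $N = M_\e$, which is a CA number of parameter $\e$ by Lemma \ref{lem501}. Writing $k = \prod_{p \in \Pz} p^{a_p}$, the definition \eqref{5.5} together with \eqref{5.1} and $M_\e = \prod_p p^{\lfloor \mu(p,\e) \rfloor}$ gives
\begin{displaymath}
\text{ben}_\e(k) = \sum_{p \in \Pz} b_p(a_p), \qquad b_p(a) = \log\!\left( \frac{\sigma(p^{\lfloor \mu \rfloor})}{p^{(1+\e)\lfloor \mu \rfloor}} \right) - \log\!\left( \frac{\sigma(p^{a})}{p^{(1+\e)a}} \right),
\end{displaymath}
where $\mu = \mu(p,\e)$, the sum being finite because $b_p(0) = 0$ for all but the finitely many primes with $\lfloor \mu(p,\e) \rfloor \ge 1$. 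A short calculation with \eqref{5.1} shows that $a \mapsto \sigma(p^a)/p^{(1+\e)a}$ is log-concave over the non-negative integers and attains its maximum at $a = \lfloor \mu \rfloor = \nu_p(M_\e)$ — this is precisely how the exponents of $M_\e$ are chosen — so each $b_p(a) \ge 0$, and moreover $b_p$ is non-decreasing on $a \ge \lfloor \mu \rfloor$ with $b_p(a) \to \infty$ as $a \to \infty$, since $\sigma(p^a)/p^{(1+\e)a} \to 0$.

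Next I would bound the primes dividing $k$. Since $\mu(p,\e) \to 0$ as $p \to \infty$, there is a prime $p^\ast = p^\ast(\e)$ such that $\lfloor \mu(p,\e) \rfloor = 0$, i.e.\ $p \nmid M_\e$, for every prime $p > p^\ast$; for such a prime and any $a \ge 1$ one computes
\begin{displaymath}
b_p(a) \ge b_p(1) = (1+\e)\log p - \log(1+p) \ge \e \log p - \log 2 .
\end{displaymath}
Consequently, if $\text{ben}_\e(k) \le \be$, then $a_p = 0$ for every prime $p$ with $p > P_0 := \max\{ p^\ast, \exp((\be + \log 2)/\e) \}$; that is, $k$ is divisible only by primes not exceeding $P_0$.

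Finally, for each of the finitely many primes $p \le P_0$, the relation $b_p(a) \to \infty$ established in the first step yields an integer $A_p = A_p(\e,\be)$ with $b_p(a) > \be$ whenever $a > A_p$. Combining this with the previous paragraph, the hypothesis $\text{ben}_\e(k) \le \be$ forces $a_p \le A_p$ for every $p \le P_0$ and $a_p = 0$ for $p > P_0$, so $k \le \prod_{p \le P_0} p^{A_p}$. Hence only finitely many positive integers $k$ satisfy $\text{ben}_\e(k) \le \be$, which proves the lemma.

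The only step that calls for genuine care is the uniform-in-$p$ lower bound for the local benefit $b_p(1)$ of introducing a prime $p \nmid M_\e$: one must check that this benefit grows at least like $\e \log p$, which is what allows one to discard all sufficiently large primes at a single stroke. The log-concavity of $a \mapsto \sigma(p^a)/p^{(1+\e)a}$, the identification of its maximiser with $\nu_p(M_\e)$, and the limit $\sigma(p^a)/p^{(1+\e)a} \to 0$ are then routine consequences of the closed form \eqref{5.1}.
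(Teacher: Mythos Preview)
Your argument is correct. The paper does not give its own proof of this lemma; it simply refers the reader to Nicolas \cite[Proposition 4.14]{nicolas2}. What you have written is essentially a self-contained version of that standard argument: decompose $\text{ben}_\e(k)$ additively over primes using the multiplicativity of $\sigma$ and the product form of $M_\e$, observe that each local term $b_p(a)$ is non-negative because the exponent $\lfloor\mu(p,\e)\rfloor$ maximises $\sigma(p^a)/p^{(1+\e)a}$, and then bound separately the primes dividing $k$ (via $b_p(1)\ge \e\log p-\log 2$ for $p\nmid M_\e$) and the exponents at each such prime (via $b_p(a)\to\infty$). Your write-up is slightly more explicit than what one usually sees, in that you isolate the quantitative bound $P_0=\max\{p^\ast,\exp((\beta+\log 2)/\e)\}$; this is exactly the kind of estimate underlying Nicolas' algorithm for enumerating integers of small benefit, which the paper invokes in Proposition \ref{prop503}.

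One very minor point of phrasing: when you say ``the sum being finite because $b_p(0)=0$ for all but the finitely many primes with $\lfloor\mu(p,\e)\rfloor\ge1$'', you are implicitly also using that $a_p=0$ for all but finitely many $p$; it would be cleaner to say that the only non-zero summands come from primes dividing $M_\e k$.
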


\begin{proof}
See Nicolas \cite[Proposition 4.14]{nicolas2}.
\end{proof}

Also, Nicolas \cite{nicolas2} investigated an algorithm to compute all integers $k$ such that $\text{ben}_{\e}(k) \leq 
\beta$ for given $\e > 0$ and $\beta > 0$. This algorithm is efficient if $\beta$ is not too large (not much larger than 
$\e$). Now we use this algorithm to show the following result which will be very helpful for the proof of Theorem 
\ref{thm104}.
 

\begin{prop} \label{prop503}
The only integer $n$ in the interval $(SA_{425}, N_{39})$ satisfying
\begin{equation}
\frac{\sigma(n)}{n} > 8.8272 \tag{5.6} \label{5.6}
\end{equation}
is given by $n = J_0$.
\end{prop}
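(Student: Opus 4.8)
The plan is to convert the lower bound \eqref{5.6} on $\sigma(n)/n$, together with the bound $n < N_{39}$, into an upper bound on the \emph{benefit} of $n$ relative to a well-chosen colossally abundant number, and then to invoke the algorithm of Nicolas \cite{nicolas2} to enumerate the finitely many integers whose benefit lies below that threshold.

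Concretely, I would fix a parameter $\e > 0$ and set $N = M_\e$, which is a CA number of parameter $\e$ by Lemma \ref{lem501} (and, by the remark following \eqref{5.5}, the value $\text{ben}_\e(\cdot)$ does not depend on this particular choice). Writing $\log(\sigma(m)/m^{1+\e}) = \log(\sigma(m)/m) - \e\log m$ in \eqref{5.5} gives
\[ \text{ben}_\e(n) = \log\!\left(\frac{\sigma(N)}{N}\right) - \log\!\left(\frac{\sigma(n)}{n}\right) + \e\bigl(\log n - \log N\bigr). \]
If $n \in (SA_{425}, N_{39})$ satisfies \eqref{5.6}, then $\log(\sigma(n)/n) > \log(8.8272)$ and $\log n < \log N_{39}$, and therefore
\[ \text{ben}_\e(n) < \log\!\left(\frac{\sigma(N)}{N}\right) - \log(8.8272) + \e\bigl(\log N_{39} - \log N\bigr) =: \beta. \]
I would take $\e$ to be the parameter of a CA number $N$ whose size is just above $N_{39}$ and for which $\sigma(N)/N$ exceeds $8.8272$ only slightly; then the last two displayed terms are both small and $\beta$ turns out to be a small positive number, of the same order of magnitude as $\e$.

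Since $m \mapsto \sigma(m)/m^{1+\e}$ is multiplicative, $\text{ben}_\e$ is additive over the prime-power components of its argument, the contribution of each prime $p$ being nonnegative and equal to $0$ at the exponent $\lfloor\mu(p,\e)\rfloor$ appearing in $M_\e$; consequently the set of integers $k$ with $\text{ben}_\e(k) \le \beta$ is finite by Lemma \ref{lem502}, and it is precisely the regime in which $\beta$ is not much larger than $\e$ that Nicolas's algorithm \cite{nicolas2} produces this set efficiently. From the resulting finite list I would keep only the integers in $(SA_{425}, N_{39})$ and, for each of them, check \eqref{5.6} directly from its prime factorization; the assertion is that $J_0$ is the sole survivor, and one verifies separately that $SA_{425} < J_0 < N_{39}$ (in accordance with \eqref{1.10} and \eqref{1.11}) and that $\sigma(J_0)/J_0 > 8.8272$. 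The step I expect to be the main obstacle is the calibration of $\e$: it must be chosen so that $M_\e$ lies close to $N_{39}$ in size with $\sigma(M_\e)/M_\e$ only just above $8.8272$, which keeps $\beta$ small enough for the enumeration to terminate in reasonable time, while still ensuring $\beta$ is large enough to capture every integer satisfying \eqref{5.6} — in particular $J_0$ itself. Everything after the correct choice of $\e$ is a finite, if possibly lengthy, verification.
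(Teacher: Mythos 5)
Your proposal is correct and follows essentially the same route as the paper's proof: convert \eqref{5.6} and the upper bound $n < N_{39}$ into an upper bound $\beta$ on $\text{ben}_\e(n)$, invoke Lemma \ref{lem502} and Nicolas's enumeration algorithm to list all integers of benefit at most $\beta$, restrict to the interval, and check \eqref{5.6} on the short list. The only divergence is a calibration detail: the paper takes $\e=0.00133$, for which $M_\e=SA_{425}$ is the \emph{left} endpoint of the interval (yielding $\beta=0.000594$ and a two-element list $\{J_0,\tfrac{157}{149}SA_{425}\}$), whereas you propose anchoring $M_\e$ just \emph{above} $N_{39}$ --- an equally serviceable normalization once the numerics are checked.
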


\begin{proof}
Let $I = (SA_{425}, N_{39})$ and let $n \in I$ be an integer satisfying \eqref{5.6}.
Further let $\e = 0.00133$.
By Lemma \ref{lem501}, we get that $M_{\e} = 
SA_{425}$ is a CA number of parameter $\e$. Now we can combine the definition \eqref{5.5} of the benefit of $n$ with 
\eqref{5.6} to see that
\begin{displaymath}
\text{ben}_{\e}(n) \leq \log(\sigma(SA_{425})) - (1+\e) \log SA_{425} + \e \log n - \log 8.8272.
\end{displaymath}
Since $n \leq N_{39}-1$, it turns out that $\text{ben}_{\e}(n) \leq \beta$, where $\beta = 0.000594$. On the other hand, we 
can utilize Nicolas' algorithm which is based on the symbolic algebra system Maple to compute the set of all positive 
integers $k \in I$ satisfying $\text{ben}_{\e}(k) \leq \beta$. Let $H = \{k \in \Z \cap I : \text{ben}_{\e}(k) \leq \beta\}$. 
It suffices to write, in Maple,
\begin{displaymath}
\texttt{bensmall($SA_{425}, \e, \beta, SA_{425}+1, N_{39}-1$, 4);}
\end{displaymath}
to see that $|H| = 2$. If we then use the command
\begin{displaymath}
\texttt{seq(lisbenshort[k][1], k=1..nops(lisbenshort));},
\end{displaymath}
we get the both elements $h_1$ and $h_2$ of $H$, namely
\begin{displaymath}
h_1 = \frac{151}{149} \times SA_{425} = J_0 \q \text{and} \q h_2 = \frac{157}{149} \times SA_{425}.
\end{displaymath}
Finally, a simple calculation shows that $J_0$ fulfills the inequality \eqref{5.6} while $h_2$ violates this inequality.
\end{proof}

Let $M_1$ are $M_2$ consecutive CA numbers satisfying the inequality \eqref{1.7}. Then, Robin 
\cite[Proposition 1]{robin1984} showed that Robin's inequality \eqref{1.7} holds for every integer $n$ such that $M_1 \leq n 
\leq M_2$. Briggs \cite{briggs2006} described an algorithm that computes all CA numbers not exceeding $10^{10^{10}}$ and 
verifies Robin’s inequality for each of these numbers.
Hence, Robin's inequality is fulfilled for every integer $n$ so that $5040 < n \leq 10^{10^{10}}$. Recently, Morrill and
Platt \cite{morrillplatt} extended Briggs result to every integer $n$ with $5040 < n \leq 10^{10^{13.11485}}$. The argument 
in the proof of the last expansion implies a slightly better result.

\begin{lem}[Morrill and Platt] \label{lem504}
Let $k_0 = 999,999,476,056$. Then $p_{k_0} = 29,996,208,012,611$ and Robin's inequality \eqref{1.7} holds for every integer
$n$ so that $5040 < n \leq N_{k_0}$.
\end{lem}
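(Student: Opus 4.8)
The proof rests on the bracketing principle of Robin \cite[Proposition 1]{robin1984} recalled above: if $M_1 < M_2$ are consecutive colossally abundant numbers both satisfying \eqref{1.7}, then \eqref{1.7} holds for every integer $n$ with $M_1 \le n \le M_2$. Hence it suffices to verify \eqref{1.7} for every CA number $M$ in the range $55440 \le M \le M^{*}$, where $55440$ is the first CA number exceeding $5040$ and $M^{*}$ denotes the least CA number with $M^{*} \ge N_{k_0}$. Indeed, the chain of consecutive CA numbers running from $55440$ up to $M^{*}$ then consists only of numbers satisfying \eqref{1.7}, so that \eqref{1.7} holds for all integers $n$ with $55440 \le n \le N_{k_0}$; the remaining finitely many integers $n$ with $5040 < n < 55440$ are settled by a direct evaluation of $\sigma(n)/n$ (the CA numbers $2,6,12,60,120,360,2520,5040$ being precisely the CA numbers that fail \eqref{1.7}). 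This is exactly the strategy of Briggs \cite{briggs2006}, pushed further by Morrill and Platt \cite{morrillplatt}; the only point to check is that their computation indeed reaches $M^{*}$.

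First I would recall how the CA numbers are enumerated. By Lemma \ref{lem501}, for each $\e > 0$ the integer $M_{\e}$ is colossally abundant of parameter $\e$, it is nondecreasing as $\e$ decreases, and the exponent $\nu_p(M_{\e}) = \lfloor \mu(p,\e) \rfloor$ increases from $a-1$ to $a$ exactly as $\e$ passes the critical value $\e_{p,a}$ determined by $p^{\e_{p,a}} = (p^{a+1}-1)/(p^{a+1}-p)$. Sorting the numbers $\e_{p,a}$ in decreasing order therefore yields the increasing sequence of all CA numbers, each obtained from its predecessor by multiplying by the corresponding prime $p$. For each CA number $M$ produced this way one computes $\sigma(M)/M = \prod_{p \le P}(p^{\nu_p(M)+1}-1)/(p^{\nu_p(M)}(p-1))$ and $\log M = \vartheta(P) + \sum_{p < P}(\nu_p(M)-1)\log p$, where $P$ is the largest prime factor of $M$, and checks $\sigma(M)/M < e^{\gamma}\log_2 M$ in rigorous interval arithmetic, using a certified enclosure of $e^{\gamma}$ together with effective bounds for $\vartheta(P)$.

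It remains to locate $N_{k_0}$ in this list. Since all prime exponents of the primorial $N_{k_0}$ equal $1$, its largest prime factor is $p_{k_0}$ and $\log_2 N_{k_0} = \log \vartheta(p_{k_0})$; the stated value $p_{k_0} = 29{,}996{,}208{,}012{,}611$ I would confirm with Walisch's \textit{primecount} \cite{walisch} (or from tabulated values of the prime-counting function). As $\vartheta(p_{k_0})$ is of size $\approx p_{k_0} \approx 3.0 \times 10^{13}$, one has $\log_{10} N_{k_0} = \vartheta(p_{k_0})/\log 10 \approx 1.30 \times 10^{13}$, so $N_{k_0}$ is of size about $10^{10^{13.11}}$, and the least CA number $M^{*} \ge N_{k_0}$ is only marginally larger, in the sense that $\log M^{*}$ and $\log N_{k_0} = \vartheta(p_{k_0})$ differ by less than $2\log p_{k_0}$; hence $M^{*}$ lies well within the range of CA numbers enumerated and verified by Morrill and Platt. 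Combining this with the first two paragraphs proves the lemma.

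The main difficulty is not a new mathematical idea but the rigorous large-scale enumeration and testing of CA numbers of size up to roughly $10^{10^{13.11}}$, i.e.\ keeping the comparison $\sigma(M)/M < e^{\gamma}\log_2 M$ provably correct in interval arithmetic for numbers of this magnitude — precisely the computational achievement of \cite{briggs2006} and \cite{morrillplatt}. For the present lemma the only additional steps are to read off $p_{k_0}$ (hence $\vartheta(p_{k_0})$) and to confirm that the list of CA numbers verified there extends past $N_{k_0}$; no further heavy computation is required.
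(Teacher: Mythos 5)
Your proposal follows essentially the same route as the paper: the paper gives no independent proof of this lemma, instead invoking Robin's bracketing proposition for consecutive colossally abundant numbers together with the computational verifications of Briggs and of Morrill and Platt, and remarking that the argument behind the latter's result yields the marginally sharper bound $N_{k_0}$. Your reconstruction of that CA-enumeration-and-verification strategy (critical $\e$-values, interval-arithmetic check of $\sigma(M)/M < e^{\gamma}\log_2 M$, direct check of the small range $5040 < n < 55440$) is consistent with what the paper relies on, and your closing caveat correctly identifies the one point that must be read off from \cite{morrillplatt}, namely that their verified list of CA numbers reaches at least the first CA number $\ge N_{k_0}$.
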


In the last lemma, that we need in the proof of Theorem \ref{thm104}, we note an upper bound for Chebyshev's 
$\vartheta$-function (cf.\:\eqref{2.6}) obtained by Büthe \cite[Theorem 2]{buethe}.

\begin{lem}[B\"uthe] \label{lem505}
For every $x$ satisfying $0 < x \leq 10^{19}$, one has $\vartheta(x) < x$.
\end{lem}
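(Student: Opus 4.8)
The plan is to split the range $0 < x \le 10^{19}$ into a small range, handled by direct computation, and a large range, handled by an analytic argument that uses the partial numerical verification of the Riemann hypothesis; this is the method of B\"uthe \cite{buethe}, to which one can of course simply refer. For $x$ below a suitable threshold $x_1$ (as far as reliable prime tables, or prime-counting algorithms computing $\vartheta$ directly, comfortably reach), the bound $\vartheta(x) < x$ is checked by computation: $\vartheta$ is a step function that jumps by $\log p$ at each prime $p$ and is constant in between, so it suffices to verify $\vartheta(p_n) < p_n$ at the finitely many primes $p_n \le x_1$, after which the inequality is automatically preserved on each interval $[p_n, p_{n+1})$ because $\vartheta$ is constant there while $x$ increases.

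For $x_1 < x \le 10^{19}$ I would pass to Chebyshev's $\psi$-function. From $\psi(x) = \vartheta(x) + \vartheta(x^{1/2}) + \vartheta(x^{1/3}) + \cdots$ and the elementary lower bound $\vartheta(y) \ge (1-\delta)y$ for $y \ge \sqrt{x_1}$ (a Rosser--Schoenfeld-type estimate), one gets $\vartheta(x) \le \psi(x) - \vartheta(\sqrt x) < \psi(x) - (1-\delta)\sqrt x$, so it suffices to establish $\psi(x) < x + (1-\delta)\sqrt x$ on this range. For this, fix a smooth function $\phi = \phi_x$ that majorizes $\mathbf 1_{(0,x]}$, is $\ge 0$, and whose Mellin transform $\widetilde\phi(s)$ decays rapidly as $|\Im s|\to\infty$, and apply the explicit formula
\begin{displaymath}
\psi(x) \le \sum_{n\ge 1}\Lambda(n)\phi(n) = \widetilde\phi(1) - \sum_{\rho}\widetilde\phi(\rho) + (\text{lower-order terms}),
\end{displaymath}
where $\rho = \beta + it$ runs over the nontrivial zeros of $\zeta$, and $\widetilde\phi(1)$ is close to $x$.

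Now split the sum over the zeros at a height $T$. For $|t| \le T$ one has $\beta = \tfrac12$ --- this is exactly the rigorously verified part of RH (Platt) that B\"uthe uses --- so those terms contribute at most $\sqrt x\,\sum_{|t|\le T}|\widetilde\phi(\tfrac12 + it)|$; for $|t| > T$ the rapid decay of $\widetilde\phi$, together with the classical bound $N(T) \ll T\log T$ for the number of zeros up to height $T$, makes the tail smaller than any fixed power of $x$ once $x \le 10^{19}$. Unsmoothing back to $\psi(x)$ then delivers the bound $\psi(x) < x + (1-\delta)\sqrt x$, and hence the lemma.

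The main obstacle is this analytic step, more precisely the simultaneous calibration of the kernel $\phi$ so that: (i) the unsmoothing error between $\sum_n\Lambda(n)\phi(n)$ and $\psi(x)$ is $o(\sqrt x)$; (ii) the weighted sum $\sqrt x\,\sum_{|t|\le T}|\widetilde\phi(\tfrac12+it)|$ over the verified zeros stays below $(1-\delta)\sqrt x$, which forces one to use the actual ordinates $t_1 \approx 14.13,\ t_2 \approx 21.02,\dots$ of the low-lying zeros rather than just their count; and (iii) the tail over $|t| > T$ stays negligible uniformly for $x \le 10^{19}$. The tension between (i) and (ii)--(iii) --- a wider smoothing shrinks the zero sums but worsens the unsmoothing --- is what ties the admissible range of $x$ to the height $T$ currently available, and pushing this through quantitatively is the technical heart of the argument.
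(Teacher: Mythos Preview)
The paper does not prove this lemma at all: it is stated as a result of B\"uthe and attributed directly to \cite[Theorem 2]{buethe}, with no argument given. Your sketch is a reasonable outline of B\"uthe's actual method (smoothed explicit formula, splitting the zero sum at a height $T$ up to which RH is verified, unsmoothing back to $\psi$ and then $\vartheta$), and you yourself note that one may simply refer to \cite{buethe}; for the purposes of this paper that citation is all that is needed.
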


Now we utilize \eqref{5.3}, Theorem \ref{thm101}, Proposition \ref{prop503}, Lemma \ref{lem504}, and Lemma \ref{lem505} to
obtain the following proof of Theorem \ref{thm104}.

\begin{proof}[Proof of Theorem \ref{thm104}]
First, we can combine the inequality given in Theorem \ref{thm101} with \eqref{5.3} to see that the required inequality
\eqref{1.12} holds for every integer $n \geq N_{740,322}$. Next, we show that the inequality \eqref{1.12} holds for every 
integer $n$ satisfying $N_{515} \leq n < N_{740,322}$. For this purpose, we set
\begin{displaymath}
F(x) = \log x + \log_2x + \frac{\log_2x-1}{\log x} - \frac{\log_2^2x - 4\log_2x + 4.897}{2\log^2x}.
\end{displaymath}
We check with a computer that the inequality
\begin{equation}
\log_2 N_{k+1} < F(k) \tag{5.10} \label{5.10}
\end{equation}
holds for every integer $k$ so that $515 \leq k \leq 740,322$. Let $n$ be an integer with $N_{515} \leq n < N_{740,322}$ 
and let $k$ be the unique positive integer so that $N_k \leq n < N_{k+1}$. Then $515 \leq k \leq 740,321$ and $K(n) = k$, and 
\eqref{5.10} implies $e^{\gamma} \log_2n < e^{\gamma} F(K(n))$.
Now we can use Lemma \ref{lem504} to see that the required inequality \eqref{1.12} also holds for every integer $n$ with
$N_{515} \leq n < N_{740,322}$.
Next, we prove the required inequality \eqref{1.12} for every integer $n$ with $N_{60} \leq n < N_{515}$. According to
Nicolas \cite[p.\:27]{nicolas2}, we have
\begin{equation}
\frac{\sigma(m)}{m} < e^{\gamma} \left( \log_2 m - \frac{0.603}{\sqrt{\log m}} \right) \tag{5.11} \label{5.11}
\end{equation}
for every integer $m \in J$, where $J = [SA_{123}, \exp(10^9)]$. Note that $[N_{60}, N_{515}) \subset J$. Then, we check 
with a computer that
\begin{equation}
\log_2 N_{k+1} - \frac{0.603}{\sqrt{\log N_{k+1}}} < F(k) \tag{5.12} \label{5.12}
\end{equation}
for every integer $k$ with $60 \leq k \leq 514$. If $n$ is an integer with $N_{60} \leq n < N_{515}$, we let $k$ be
again the unique positive integer so that $N_k \leq n < N_{k+1}$. Then \eqref{5.11} and \eqref{5.12} imply the required 
inequality \eqref{1.12} for every integer $n$ with $N_{60} \leq n < N_{515}$. To prove the theorem for every integer $n$ with 
$N_{25} \leq n < N_{60}$, let $a$ and $b$ be positive integers with $a < b$. Note that $F(x)$ is an increasing function for 
every $x > 1$. Define $m = m(b)$ to be the smallest positive integer with $N_b \leq SA_m$ and let $n$ be a positive integer 
with $N_a \leq n < N_b$. By the definition of superabundant numbers, we have $\sigma(n)/n < \sigma(SA_m)/SA_m$. If 
$\sigma(SA_m)/SA_m < e^{\gamma}F(a)$, we conclude that the required inequality holds for every positive integer $n$ with $N_a 
\leq n < N_b$. Now we can use this observation combined with the table involving superabundant numbers computed by Noe 
\cite{noe} to get the following table:
\begin{center}
\begin{tabular}{|c|c|c|c|c|c|c|}
\hline
$a$\rule{0mm}{4mm}   &  $b$  & $\log_{10}N_b$      &      $m$ & $\log_{10}SA_m$     & $\sigma(SA_m)/SA_m$ & $e^{\gamma}F(a)$            \\ \hline \hline
$59$\rule{0mm}{4mm}  & $60$  & $115.391780\ldots$  & $784$    & $115.415202\ldots$  & $9.849479\ldots$    & $9.875818\ldots$ \\ \hline
$58$\rule{0mm}{4mm}  & $59$  & $112.943073\ldots$  & $766$    & $113.015528\ldots$  & $9.810393\ldots$    & $9.875818\ldots$ \\ \hline
$57$\rule{0mm}{4mm}  & $58$  & $110.500593\ldots$  & $749$    & $110.676077\ldots$  & $9.771724\ldots$    & $9.795336\ldots$ \\ \hline
$56$\rule{0mm}{4mm}  & $57$  & $108.067624\ldots$  & $733$    & $108.095692\ldots$  & $9.730790\ldots$    & $9.753928\ldots$ \\ \hline
$55$\rule{0mm}{4mm}  & $56$  & $105.637872\ldots$  & $717$    & $105.717294\ldots$  & $9.690245\ldots$    & $9.711702\ldots$ \\ \hline
$54$\rule{0mm}{4mm}  & $55$  & $103.217916\ldots$  & $700$    & $103.247276\ldots$  & $9.644670\ldots$    & $9.668625\ldots$ \\ \hline
$53$\rule{0mm}{4mm}  & $54$  & $100.807983\ldots$  & $684$    & $100.887440\ldots$  & $9.602737\ldots$    & $9.624664\ldots$ \\ \hline
$52$\rule{0mm}{4mm}  & $53$  & $ 98.408309\ldots$  & $669$    & $ 98.531414\ldots$  & $9.560620\ldots$    & $9.579781\ldots$ \\ \hline
$51$\rule{0mm}{4mm}  & $52$  & $ 96.026292\ldots$  & $652$    & $ 96.183110\ldots$  & $9.517938\ldots$    & $9.533938\ldots$ \\ \hline
$50$\rule{0mm}{4mm}  & $51$  & $ 93.647894\ldots$  & $633$    & $ 93.660459\ldots$  & $9.467867\ldots$    & $9.487093\ldots$ \\ \hline
$49$\rule{0mm}{4mm}  & $50$  & $ 91.280538\ldots$  & $618$    & $ 91.361606\ldots$  & $9.420528\ldots$    & $9.439203\ldots$ \\ \hline
$48$\rule{0mm}{4mm}  & $49$  & $ 88.920703\ldots$  & $601$    & $ 89.067140\ldots$  & $9.372950\ldots$    & $9.390220\ldots$ \\ \hline
$47$\rule{0mm}{4mm}  & $48$  & $ 86.564677\ldots$  & $579$    & $ 86.582010\ldots$  & $9.322341\ldots$    & $9.340094\ldots$ \\ \hline
$46$\rule{0mm}{4mm}  & $47$  & $ 84.216372\ldots$  & $558$    & $ 84.221796\ldots$  & $9.272648\ldots$    & $9.288772\ldots$ \\ \hline
$45$\rule{0mm}{4mm}  & $46$  & $ 81.892090\ldots$  & $540$    & $ 81.964117\ldots$  & $9.221700\ldots$    & $9.236195\ldots$ \\ \hline
$44$\rule{0mm}{4mm}  & $45$  & $ 79.593237\ldots$  & $522$    & $ 79.711264\ldots$  & $9.170468\ldots$    & $9.182303\ldots$ \\ \hline
$43$\rule{0mm}{4mm}  & $44$  & $ 77.298770\ldots$  & $505$    & $ 77.473218\ldots$  & $9.117764\ldots$    & $9.127029\ldots$ \\ \hline
$42$\rule{0mm}{4mm}  & $43$  & $ 75.013213\ldots$  & $490$    & $ 75.028653\ldots$  & $9.060176\ldots$    & $9.070301\ldots$ \\ \hline
$41$\rule{0mm}{4mm}  & $42$  & $ 72.732180\ldots$  & $472$    & $ 72.777015\ldots$  & $9.004245\ldots$    & $9.012042\ldots$ \\ \hline
$40$\rule{0mm}{4mm}  & $41$  & $ 70.474501\ldots$  & $456$    & $ 70.564827\ldots$  & $8.949341\ldots$    & $8.952168\ldots$ \\ \hline
\end{tabular}
\end{center}
This table shows that the required inequality \eqref{1.12} holds for every integer $n$ with $N_{40} \leq n < N_{60}$. Next, we verify the required inequality \eqref{1.12} for every integer $n$ with $N_{39} \leq n < N_{40}$. We have $\max\{ m \in \Z_{\geq 0} \mid SA_m < N_{40} \} = 440$. Hence,
\begin{displaymath}
\frac{\sigma(n)}{n} \leq \frac{\sigma(SA_{440})}{SA_{440}} = 8.888355\ldots < 8.890590\ldots = e^{\gamma} F(39) = e^{\gamma} F(K(n)),
\end{displaymath}
as desired. Finally, we need to show that the required inequality \eqref{1.12} also holds for every integer $n \in (J_0, N_{39})$ where $J_0$ is defined by \eqref{1.10}. Since $(J_0, N_{39}) \subset (N_{38}, N_{39})$, we have $K(n) = 38$ for every $n \in (J_0, N_{39})$. According to Proposition \ref{prop503}, we have
\begin{displaymath}
\frac{\sigma(n)}{n} \leq 8.8272 < 8.827208\ldots = e^{\gamma} F(38) = e^{\gamma} F(K(n))
\end{displaymath}
for every integer $n \in (J_0, N_{39})$, which completes the proof.
\end{proof}

Let $k$ be a positive integer. In the case where $n=N_k$ is a primorial, we can use \eqref{1.2} and \eqref{1.3} to see that
\begin{displaymath}
\frac{\sigma(N_k)}{N_k} = \frac{e^{\gamma}}{\zeta(2)} \, \log p_k  + O( e^{-a(\log k)^{3/5}})
\end{displaymath}
as $k \to \infty$, where $a$ is a positive absolute constant. Applying the well known identity $\zeta(2) = \pi^2/6$ and the asymptotic formula for $\log p_k$ found by Cipolla \cite{cp}, we get that
\begin{displaymath}
\frac{\sigma(N_k)}{N_k} = \frac{6e^{\gamma}}{\pi^2} \left( \log k + \log_2 k + \frac{\log_2 k-1}{\log k} - \frac{\log_2^2k - 4\log_2k + 5}{2\log^2k} \right) + O \left( \frac{\log_2^4k}{\log^4k} \right) 
\end{displaymath}
as $k \to\infty$. Now we can utilize Theorems \ref{thm101}-\ref{thm103} to get the following results for $\sigma(N_k)/N_k$.

\begin{prop} \label{prop505}
For every $k \geq 734,170$, one has
\begin{equation}
\frac{\sigma(N_k)}{N_k} < \frac{6e^{\gamma}}{\pi^2} \left( 1 + \frac{1}{p_k} \right) \left( \log k + \log_2 k + \frac{\log_2 k-1}{\log k} - \frac{\log_2^2k - 4\log_2k + 4.897}{2\log^2k}\right) \tag{5.13} \label{5.13}
\end{equation}
and for every $k \geq \exp(\exp(5.879))$, one has
\begin{equation}
\frac{\sigma(N_k)}{N_k} > \frac{6e^{\gamma}}{\pi^2} \left( \log k + \log_2 k + \frac{\log_2 k -1}{\log k} - \frac{\log_2^2k - 4\log_2k + 5}{2\log^2k}\right). \tag{5.14} \label{5.14}
\end{equation}
If the Riemann hypothesis is true, then the inequality \eqref{5.14} holds for every $k \geq 3.900491 \times 10^{30}$.
\end{prop}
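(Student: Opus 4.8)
The plan is to reduce the whole proposition to the bounds on $N_k/\varphi(N_k)$ already proved in Theorems \ref{thm101}--\ref{thm103}, by means of the elementary identity
\begin{displaymath}
\frac{\sigma(N_k)}{N_k} = \prod_{p \leq p_k}\left( 1 + \frac{1}{p} \right) = \frac{N_k}{\varphi(N_k)} \prod_{p \leq p_k}\left( 1 - \frac{1}{p^2} \right),
\end{displaymath}
which is \eqref{5.2} for the squarefree integer $N_k$ (equivalently, it follows from $\sigma(N_k)/N_k = \prod_{p\leq p_k}\sigma(p)/p$, from $N_k/\varphi(N_k) = \prod_{p\leq p_k}(1-1/p)^{-1}$, and from $(1+1/p)(1-1/p)=1-1/p^2$). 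Thus everything reduces to two-sided control of the finite Euler product $\prod_{p \leq p_k}(1-1/p^2)$ near its limit $1/\zeta(2) = 6/\pi^2$.

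Concretely, I would first prove the estimate
\begin{displaymath}
\frac{6}{\pi^2} < \prod_{p \leq p_k}\left( 1 - \frac{1}{p^2} \right) < \frac{6}{\pi^2}\left( 1 + \frac{1}{p_k} \right) \qquad (k \geq 1).
\end{displaymath}
Since $\prod_{p}(1-1/p^2) = 1/\zeta(2)$ and $\zeta(2)=\pi^2/6$, we have $\prod_{p\leq p_k}(1-1/p^2) = \left(\zeta(2)\prod_{p>p_k}(1-1/p^2)\right)^{-1}$, and the lower bound is immediate because each factor of $\prod_{p>p_k}(1-1/p^2)$ lies in $(0,1)$. For the upper bound, dropping the composite indices only enlarges the tail, so
\begin{displaymath}
\prod_{p>p_k}\left( 1 - \frac{1}{p^2} \right) > \prod_{n>p_k}\left( 1 - \frac{1}{n^2} \right) = \lim_{N\to\infty}\prod_{n=p_k+1}^{N}\frac{(n-1)(n+1)}{n^2} = \frac{p_k}{p_k+1},
\end{displaymath}
the last product telescoping, and rearranging yields the stated upper bound.

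For the two lower bounds in \eqref{5.14}, the identity together with the lower estimate gives $\sigma(N_k)/N_k > (6/\pi^2)\,N_k/\varphi(N_k)$ for every $k$, and Theorem \ref{thm102} (for $k \geq \exp(\exp(5.879))$) and, under the Riemann hypothesis, Theorem \ref{thm103} (for $k \geq 3.900491\times 10^{30}$) then yield \eqref{5.14} in the respective ranges with no further work; in particular \eqref{5.14} needs no finite check. For the upper bound \eqref{5.13}, the identity and the upper estimate give $\sigma(N_k)/N_k < (6/\pi^2)(1+1/p_k)\,N_k/\varphi(N_k)$, and for $k \geq 740{,}322$ the inequality \eqref{2.3}, which is proved for $k \geq 740{,}322$ in the proof of Theorem \ref{thm101} (and is the assertion of Theorem \ref{thm101} for $n=N_k$, since then $K(N_k)=k$), bounds $N_k/\varphi(N_k)$ above by $e^{\gamma}$ times the expression in large parentheses on the right of \eqref{5.13}; hence \eqref{5.13} holds for all $k \geq 740{,}322$.

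The main obstacle is the remaining window $734{,}170 \leq k \leq 740{,}321$, which lies below the threshold of Theorem \ref{thm101} (indeed \eqref{2.3} already fails at $k = 740{,}321$). Here I would verify \eqref{5.13} directly: compute $\sigma(N_k)/N_k = \prod_{p\leq p_k}(1+1/p)$ from a table of primes up to $p_{740{,}321}$, which is of size about $1.2\times 10^{7}$, and compare it with $\tfrac{6e^{\gamma}}{\pi^2}(1+1/p_k)$ times that same bracket. One should also record that \eqref{5.13} fails at $k=734{,}169$. I expect these $O(10^{3})$ numerical checks to be the only genuinely computational part of the argument, whereas \eqref{5.14} is inherited from Theorems \ref{thm102} and \ref{thm103} with no finite verification at all.
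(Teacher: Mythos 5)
Your argument is correct and, in structure, essentially the paper's: both reduce $\sigma(N_k)/N_k$ to $N_k/\varphi(N_k)$ via the identity \eqref{5.2} applied to the squarefree integer $N_k$, sandwich the Euler factor $\prod_{p\le p_k}(1-1/p^2)$ between $6/\pi^2$ and $(6/\pi^2)(1+1/p_k)$, feed in Theorems \ref{thm101}--\ref{thm103}, and close the window $734{,}170 \le k < 740{,}322$ by direct computation. The one small difference is that the paper obtains the two-sided bound on $\prod_{p\le p_k}(1-1/p^2)$ by citing \cite[Lemma 4.3]{dusart20182}, while you derive it yourself from $\prod_p(1-1/p^2)=1/\zeta(2)$ and the telescoping estimate $\prod_{n>p_k}(1-1/n^2)=p_k/(p_k+1)$; this buys you a self-contained, fully elementary proof of that ingredient at essentially no extra length.
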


\begin{proof}
If we combine \eqref{5.2} with \cite[Lemma 4.3]{dusart20182}, we get
\begin{equation}
\frac{e^{\gamma}}{\zeta(2)} \times \frac{N_k}{\varphi(N_k)} \leq \frac{\sigma(N_k)}{N_k} \leq \frac{e^{\gamma}}{\zeta(2)} \left( 1 + \frac{1}{p_k} \right) \frac{N_k}{\varphi(N_k)} \tag{5.15} \label{5.15}
\end{equation}
Now we can apply Theorem \ref{thm101} to the right-hand side inequality of \eqref{5.15} and get the required upper bound \eqref{5.13} for every integer $k \geq 740,322$. A direct computer check provides that the required inequality \eqref{5.13} also holds for every integer $k$ with $734,170 \leq k < 740,322$. On the other hand, it suffices to apply Theorem \ref{thm102} (resp. Theorem \ref{thm103}) to the left-hand side inequality of \eqref{5.15} to see that the inequality \eqref{5.14} holds for every $k \geq \exp(\exp(5.879))$ (resp. for every $k \geq 3.900491 \times 10^{30}$).
\end{proof}

\section{Proof of Corollary \ref{kor105}}

We start with the following both preliminary results. In the first one, one has
\begin{align*}
SA_{106} &= 224,403,121,196,654,400, \\
SA_{107} &= 448,806,242,393,308,800, \\
N_{15} &= 614,889,782,588,491,410.
\end{align*}

\begin{prop} \label{prop601}
The only two integers $n$ in the interval $(SA_{107}, N_{15})$ satisfying
\begin{equation}
\frac{\sigma(n)}{n} > \frac{\sigma(SA_{106})}{SA_{106}} \tag{6.1} \label{6.1}
\end{equation}
are $n_1 = 497,325,836,165,558,400$ and $n_2 = 521,585,633,051,683,200$.
\end{prop}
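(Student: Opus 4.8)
The plan is to run Nicolas' benefit machinery, following closely the proof of Proposition~\ref{prop503}. First I would record the relevant data. One has
\begin{displaymath}
SA_{106} = 2^6 \times 3^4 \times 5^2 \times 7^2 \times \prod_{11 \leq p \leq 37} p, \q SA_{107} = 2\,SA_{106},
\end{displaymath}
so the largest prime factor of $SA_{106}$ is $37$, with exponent $1$. Using Lemma~\ref{lem501} one checks that $M_{\e} = SA_{106}$ for every rational $\e$ in a suitable small interval (one may take $\e = 0.0065$); fix one such value, so that $SA_{106}$ is a colossally abundant number of parameter $\e$. A direct computation shows that both $n_1$ and $n_2$ lie in $I := (SA_{107}, N_{15})$ and satisfy \eqref{6.1}, so it remains only to show there is no further such integer.

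To that end, let $n$ be an integer in $I$ with $\sigma(n)/n > \sigma(SA_{106})/SA_{106}$. Writing the benefit with respect to $SA_{106}$ in the second form of \eqref{5.5},
\begin{displaymath}
\text{ben}_{\e}(n) = \log\left( \frac{\sigma(SA_{106})}{\sigma(n)} \right) + (1+\e)(\log n - \log SA_{106}),
\end{displaymath}
and noting that \eqref{6.1} is equivalent to $\log(\sigma(SA_{106})/\sigma(n)) < \log SA_{106} - \log n$, one obtains at once
\begin{displaymath}
\text{ben}_{\e}(n) < \e(\log n - \log SA_{106}) < \e\log\left( \frac{N_{15}}{SA_{106}} \right) \approx 1.008\,\e.
\end{displaymath}
Hence $\text{ben}_{\e}(n) \leq \beta$ for a fixed rational $\beta$ slightly exceeding $\e\log(N_{15}/SA_{106})$; note that $\beta$ is of the same order of magnitude as $\e$. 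By Lemma~\ref{lem502} the set $H = \{k \in \Z \cap I : \text{ben}_{\e}(k) \leq \beta\}$ is finite.

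I would then compute $H$ by invoking Nicolas' \texttt{bensmall} procedure with inputs $SA_{106}$, $\e$, $\beta$, $SA_{107}+1$, $N_{15}-1$ and a suitable depth parameter, and list its members exactly as in the proof of Proposition~\ref{prop503}. Since the largest prime factor of $SA_{106}$ is $37$, the cheapest modifications of $SA_{106}$ are obtained by replacing $37^1$ by a slightly larger $p^1$ and rescaling into $I$, so the list $H$ is short. For each listed $k$ one computes $\sigma(k)/k$ via \eqref{5.1} and compares it with $\sigma(SA_{106})/SA_{106}$, finding that the only members of $H$ for which the strict inequality \eqref{6.1} holds are $n_1$ and $n_2$. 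Combined with the first paragraph, this proves the proposition.

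The real obstacle is not the concluding comparison but the two points that make the enumeration rigorous: verifying that $SA_{106}$ genuinely is a CA number of the chosen parameter $\e$ (so that $\text{ben}_{\e} \geq 0$ on $\Z$ and the displayed bound is valid), and choosing $\beta$ small enough — of the same order as $\e$ — that \texttt{bensmall} both terminates quickly and provably outputs \emph{all} of $H$. This completeness of the algorithm's output, rather than the numerical check at the end, is the heart of the argument.
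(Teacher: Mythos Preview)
Your proposal is correct and follows essentially the same approach as the paper: the paper likewise takes $\e = 0.0065$, verifies via Lemma~\ref{lem501} that $M_{\e} = SA_{106}$, bounds $\text{ben}_{\e}(n) \leq \e(\log n - \log SA_{106}) \leq \beta$ with $\beta = 0.007$, and then runs Nicolas' \texttt{bensmall} procedure over $I$ to obtain exactly $H = \{n_1, n_2\}$. The only cosmetic difference is that the paper finds $H$ already equal to $\{n_1, n_2\}$, so no further sieving against \eqref{6.1} is needed.
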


\begin{proof}
First, we easily check with a computer that $\sigma(n_1)/n_1 > \sigma(SA_{106})/SA_{106}$ and that $\sigma(n_2)/n_2 > \sigma(SA_{106})/SA_{106}$. Next, let $n$ be an integer with $SA_{107} < n < N_{15}$ satisfying \eqref{6.1}. We need to show that $n = n_1$ or $n = n_2$. In order to do this, we set $I = (SA_{107}, N_{15})$ and $\e = 0.0065$
By Lemma \ref{lem501}, we get that $M_{\e} = 224,403,121,196,654,400 = SA_{106}$ is a CA number of parameter $\e$. Now we can use the definition \eqref{5.5} of the benefit of $n$ to see that
\begin{displaymath}
\text{ben}_{\e}(n) = \log \left( \frac{\sigma(SA_{106})}{\sigma(n)} \right) + (1+ \e)(\log n - \log SA_{106}).
\end{displaymath}
Applying \eqref{6.1}, we get that the inequality
\begin{displaymath}
\text{ben}_{\e}(n) \leq \e(\log n - \log SA_{106}).
\end{displaymath}
Since $n \leq N_{15}-1$, it turns out that $\text{ben}_{\e}(n) \leq \beta$, where $\beta = 0.007$. Similar to the proof of Proposition \ref{prop503}, it turns out that
\begin{displaymath}
\{k \in \Z \cap I : \text{ben}_{\e}(k) \leq \beta\} = \{ n_1, n_2 \}.
\end{displaymath}
Hence $n=n_1$ or $n= n_2$ and we arrive at the end of the proof.
\end{proof}

%
%

Next, we proceed as in the proof of Theorem \ref{thm104} to give a proof of Corollary \ref{kor105}.

\begin{proof}[Proof of Corollary \ref{kor105}]
According to \eqref{1.11} and Theorem \ref{thm104}, it remains to consider the case where $n$ is an integer satisfying $521,585,633,051,683,200 < n < N_{39}$. Let $a$ and $b$ be positive integers with $a < b$ and let
\begin{displaymath}
G(x) = e^{\gamma} \left( \log x + \log_2 x + \frac{\log_2 x-1}{\log x} \right).
\end{displaymath}
Note that $G$ is an increasing function for every $x > 1$. As in the proof of Theorem \ref{thm104}, we define $m = m(b)$ to be the smallest positive integer with $N_b \leq SA_m$ and let $n$ be a positive integer with $N_a \leq n < N_b$. Using the calculations of Noe \cite{noe}, we get the following table:
\begin{center}
\begin{tabular}{|c|c|c|c|c|c|c|}
\hline
$a$\rule{0mm}{4mm}  &  $b$ & $\log_{10}N_b$     &   $m$ & $\log_{10}SA_m$    & $\sigma(SA_m)/SA_m$ & $G(a)$ \\ \hline \hline
$37$\rule{0mm}{4mm} & $39$ & $65.983602\ldots$ & $426$ & $66.189950\ldots$ & $8.834195\ldots$ & $8.858198\ldots$ \\ \hline
$35$\rule{0mm}{4mm} & $37$ & $61.548698\ldots$ & $395$ & $61.572719\ldots$ & $8.708363\ldots$ & $8.726022\ldots$ \\ \hline
$33$\rule{0mm}{4mm} & $35$ & $57.173821\ldots$ & $369$ & $57.318728\ldots$ & $8.579765\ldots$ & $8.585262\ldots$ \\ \hline
$31$\rule{0mm}{4mm} & $33$ & $52.857620\ldots$ & $343$ & $52.860815\ldots$ & $8.429814\ldots$ & $8.434750\ldots$ \\ \hline
$29$\rule{0mm}{4mm} & $31$ & $48.603628\ldots$ & $316$ & $48.617914\ldots$ & $8.261174\ldots$ & $8.273065\ldots$ \\ \hline
$28$\rule{0mm}{4mm} & $29$ & $44.446746\ldots$ & $292$ & $44.475817\ldots$ & $8.100022\ldots$ & $8.187500\ldots$ \\ \hline
$27$\rule{0mm}{4mm} & $28$ & $42.409320\ldots$ & $280$ & $42.489045\ldots$ & $8.017368\ldots$ & $8.098451\ldots$ \\ \hline
$26$\rule{0mm}{4mm} & $27$ & $40.379936\ldots$ & $268$ & $40.539655\ldots$ & $7.928286\ldots$ & $8.005631\ldots$ \\ \hline
$25$\rule{0mm}{4mm} & $26$ & $38.367099\ldots$ & $253$ & $38.384319\ldots$ & $7.825677\ldots$ & $7.908711\ldots$ \\ \hline
$24$\rule{0mm}{4mm} & $25$ & $36.362777\ldots$ & $239$ & $36.465241\ldots$ & $7.732514\ldots$ & $7.807320\ldots$ \\ \hline
$23$\rule{0mm}{4mm} & $24$ & $34.376006\ldots$ & $225$ & $34.391522\ldots$ & $7.629811\ldots$ & $7.701036\ldots$ \\ \hline
$22$\rule{0mm}{4mm} & $23$ & $32.426616\ldots$ & $213$ & $32.528199\ldots$ & $7.526706\ldots$ & $7.589372\ldots$ \\ \hline
$21$\rule{0mm}{4mm} & $22$ & $30.507538\ldots$ & $197$ & $30.676941\ldots$ & $7.422168\ldots$ & $7.471768\ldots$ \\ \hline
$20$\rule{0mm}{4mm} & $21$ & $28.609911\ldots$ & $182$ & $28.850866\ldots$ & $7.313019\ldots$ & $7.347570\ldots$ \\ \hline
$19$\rule{0mm}{4mm} & $20$ & $26.746588\ldots$ & $167$ & $26.764506\ldots$ & $7.180986\ldots$ & $7.216013\ldots$ \\ \hline
$18$\rule{0mm}{4mm} & $19$ & $24.895329\ldots$ & $153$ & $24.896744\ldots$ & $7.036674\ldots$ & $7.076191\ldots$ \\ \hline
$17$\rule{0mm}{4mm} & $18$ & $23.069254\ldots$ & $142$ & $23.172469\ldots$ & $6.906365\ldots$ & $6.927020\ldots$ \\ \hline
$16$\rule{0mm}{4mm} & $17$ & $21.283925\ldots$ & $130$ & $21.296251\ldots$ & $6.759540\ldots$ & $6.767193\ldots$ \\ \hline
\end{tabular}
\end{center}
Following the argumentation from the proof of Theorem \ref{thm104}, we get that the required inequality \eqref{1.13} holds for every integer $n$ with $N_{16} \leq n < N_{39}$. Next, we consider the case where $n$ satisfies $N_{15} \leq n < N_{16}$. Here, we have $17.788\ldots \leq \log_{10} n < 19.513\ldots$. By Noe \cite{noe}, we have $\log_{10} SA_{117} = 19.264\ldots$. So if $n$ fulfills $N_{15} \leq n < SA_{117}$, we can use the definition of $SA_{117}$ as a superabundant number to see that
\begin{equation}
\frac{\sigma(n)}{n} \leq \frac{\sigma(SA_{117})}{SA_{117}} = 6.589\ldots < G(15) = G(K(n)). \tag{6.2} \label{6.2}
\end{equation}
On the other hand, we need to consider the case where $n$ satisfies $SA_{117} \leq n < N_{16}$. Note that $\log_{10} N_{16} < 19.565\ldots = \log_{10} SA_{118}$ (cf. Noe \cite{noe}). Hence we obtain that $\sigma(n)/n \leq \sigma(SA_{117})/SA_{117}$ and the computation \eqref{6.2} provides the inequality \eqref{1.13}. Finally, we need to show that the inequality \eqref{1.13} also holds for every integer $n$ with $521,585,633,051,683,200 < n < N_{15}-1$. Since $N_{14} < SA_{107} < 521,585,633,051,683,200$, it suffices to apply Proposition \ref{prop601} to see that
\begin{displaymath}
\frac{\sigma(n)}{n} \leq \frac{\sigma(SA_{106})}{SA_{106}} = 6.40729\ldots G(14) = G(K(n))
\end{displaymath}
and we arrive at the end of the proof.
\end{proof}

\section{Proof of Corollary \ref{kor106}}

Finally, we present a proof of Corollary \ref{kor106}, in which we give an answer to the question raised by Aoudjit, Berkane, and Dusart \cite[Question 1]{berkane} whether the inequality \eqref{1.9} holds unconditionally for every integer $n \geq 205$.

\begin{proof}[Proof of Corollary \ref{kor106}]
According to Corollary \ref{kor105}, the required inequality holds for every integer $n \geq N_{15} > 521,585,633,051,683,200$. Next, we verify with a computer that 
\begin{displaymath}
\log_2 N_{k+1} < \log k + \log_2 k + \frac{\log_2 k}{\log k} \tag{7.1} \label{7.1}
\end{displaymath}
for every integer $k$ so that $5 \leq k < 15$. Now let $n$ be an integer with $N_{5} \leq n < N_{15}$ und let $k$ be the unique positive integer so that $N_k \leq n < N_{k+1}$. Then $5 \leq k < 15$ and $K(n) = k$, and \eqref{7.1} implies
\begin{displaymath}
e^{\gamma}\log_2 n < e^{\gamma} \left( \log K(n) + \log_2 K(n) + \frac{\log_2 K(n)}{\log K(n)} \right).
\end{displaymath}
Now we can use Lemma \ref{lem504} to see that the required inequality also holds for every integer $n$ with $N_{5} \leq n < N_{15}$. Since $N_5 = 2,310$, we can verify the required inequality for the remaining positive integers $n$ directly with a computer.
%
%
%
%
%
%
\end{proof}

\section*{Acknowledgements}
The author wishes to thank the two beautiful souls R. and O. for the never ending inspiration. The author would also like to 
thank the anonymous referee for the useful comments and suggestions to improve the quality of this paper.

\end{document}